\newcommand{\calS}{{\mathcal S}}
\newcommand{\calG}{{\mathcal G}}
\newcommand{\calE}{{\mathcal E}}
\newcommand{\supp}[1]{{supp\,(#1)}}
\newcommand{\calM}{{\mathcal M}}
\newcommand{\calA}{{\mathcal A}}
\newcommand{\calC}{{\mathcal C}}
\newcommand{\N}{\mathbb N}
\newcommand{\R}{\mathbb R}
\newtheorem{theorem}{Theorem}
\newtheorem{lemma}{Lemma}
\newtheorem{rem}{Remark}
\newtheorem{prop}{Proposition}
\newtheorem{example}{Example}
\newtheorem{definition}{Definition}
\newtheorem{corollary}{Corollary}
\title[$G$-games with coalitions]{$G$-games with coalitions}
\author{Roy Cerqueti}
\address{University of Macerata, Department of Economics and Law.
Via Crescimbeni 20, I-62100, Macerata, Italy}
\email{roy.cerqueti@unimc.it}
\author{Emilio De Santis}
\address{University of Rome La Sapienza, Department of Mathematics.
Piazzale Aldo Moro, 5, I-00185, Rome, Italy}
\email{desantis@mat.uniroma1.it}
\begin{document}

\begin{abstract}
This paper models games where the strategies are nodes of a graph $G$ (we denote them as $G$-games) and in presence of coalition structures. 
The cases of one-shot and repeated games are presented. In the latter situation, coalitions are assumed to move from a strategy to another one under the constraint that they are adjacent in the graph.  
We introduce novel concepts of pure and mixed equilibria which are comparable with classical Nash and Berge equilibria.  
A Folk Theorem for $G$-games of repeated type is presented.   
Moreover, equilibria are proven to be described
through suitably defined Markov Chains, hence leading to a constrained Monte Carlo Markov Chain procedure. 
\medskip
\medskip
\newline
\emph{Keywords:} Game theory; Graphs; Repeated games;
constrained Monte Carlo Markov Chain.
\newline
\medskip
\emph{AMS MSC 2010:} 91A35, 91A40, 91A43, 60J20.
\end{abstract}

\maketitle

\section{Introduction}
 






We present evolutionary game models where the strategies have a graph structure. 
The players may pass from a
strategy to an adjacent one in the graph or, alternatively, they can
hold their positions. The target of each player is the maximization
of her/his payoff, and players are allowed to form coalitions.

Taking the strategies of the game as the nodes of a graph has an
intuitive motivation. In fact, we have in mind that the change of a
decision is usually a stepwise process over a continuum of
alternatives, where the decider modifies her/his status by selecting
a status which is close to the previous one. Think about a game where the strategies of the single players are 
the positions on a lattice, and at any time they have to decide a new position in the neighborhood of the previous one. 
Such situations commonly appear in the real world. This is the case of physical restrictions in the actions, with players constrained to move from a position (or strategy) to an adjacent one. An example might be the location strategies with time constraints: each player selects a geographical location which is achievable from her/his current position in no more than $10$ minutes walking. It is clear that the selected position will be close (adjacent, in some sense) to the previous one.

Some relevant contributions in game theory discuss the cases of interacting players, who are then viewed as nodes of a graph (see e.g. \cite{myerson} and the recent contribution \cite{szabo2007} with references therein). This setting is associated to the strategic behavior of the agents, whose connections of local types might create global behaviors. 

Less attention has been paid to the case in which strategies are linked together.
In this respect, it is important to give credit to some remarkable examples in
the literature of games with strategies exhibiting a graph structure. 
 
A graph-based structure of the strategies is
presented in the context of game colouring graphs, which is a well-known 
problem in game theory. 
In such a model, two players alternatively select and
colour a node of the graph under some adjacency constraints and with opposite targets  (for the details, see e.g. \cite{zhu}).
 
Under different conditions, we mention also the
game of cops and robber (see e.g. \cite{aigner} and the monograph \cite{bonato}). In this case, the players (cops and robbers)
move on a graph at each step to a node which is adjacent to the
previous position, with the intuitive target to escape (robber) and
to catch the robber by occupying her/his same node (cops).

The trapping games are also relevant (see \cite{trap}). Here a couple of players is considered, and they are assumed to select alternatively strategies along the arcs of a graph. The selection of each vertex twice is forbidden, so that one of the players loses the game when she/he cannot move (the player is trapped).
 
We feel that such games are close to us for one important aspect:
 the players move on strategies which are
adjacent nodes of a graph. However, we are more general than the
mentioned models, in that we abandon the binary situation
win-lose and the alternation of the players in moving, consider more than two players and admit the
presence of coalitions of players. Moreover, we present here a game theoretical modeling since we do not fix payoffs or a specific target for the players but, rather, we develop a general framework to be adapted to a plethora of special cases. 
 
 
As mentioned above, in the proposed setting we consider a
partition of the set of the players in coalitions -- and in so doing,
we fix a coalition structure of the game  (see e.g.
\cite{aumanncoala, owen1968game}).

 
We introduce and study the equilibria of the game. At this aim, we move from the breakthrough work of Nash \cite{Nash51} -- which
relies to the context of non-cooperative games -- and consider the Berge
extension of the classical Nash equilibria to coalitions of players
 (see \cite{berge1957theorie}). Specifically, we extend such a concept by
including the presence of a graph structure for the strategies. In
particular, we provide a definition of equilibria on the basis of
the comparison of the payoffs of adjacent strategies, hence extending the concept of 
the standard Berge equilibria. Indeed, Berge
equilibria are subcases of our setting, in that they are included in our definition when the graph of the strategies is complete. 

Our framework is then adapted to the repeated games (see
e.g. \cite{arthur1987, aush, benoit, david2001, KMR93}).
More specifically, we consider a class of such games where the
players move on adjacent strategies in the graph from a time to the
next one. The time horizon $T$ of the repeated game can be finite or infinite.
The final payoff of each coalition is assumed to be derived by the sum
of the payoffs realized at each step of the repeated game. In the
situation of infinitely played repeated games -- sometimes called
\emph{supergames} -- scholars introduce often a discount factor
$\delta \in (0,1)$ for having the convergence of the payoffs series
(see e.g. \cite{abreu, stacchetti, fudenberg1990nash} and references
therein). The discount factor is not a mere mathematical
device: it has also the intuitive economic meaning of under
evaluating the future payoffs of the repeated games. In the present
paper, we avoid the introduction of the discount factor by removing
by definition the convergence problems of the aggregated payoffs.
Specifically, in accord to classical game theory literature, we take
the total payoff of the supergame as the liminf of the mean of the
payoffs of the one-shot games (see e.g. \cite{fudenberg1990nash}).
In so doing, we are in line with the literature dealing with
infinitely played repeated games without discount factor for the
payoffs, where the convergence problems in the formulation of the
series of the total payoff are removed by definition (see e.g. \cite{kripps}).


We prove that it is sufficient that the dynamics on the graph is driven by Markov chains, and this leads also to add results to the field of Monte Carlo Markov Chain (MCMC). For a wide perspective on MCMC, see \cite{mcmc1,bremo},
while a review of the endless applications of MCMC should include
relevant contributions like \cite{mcmc2, green}.

In the cointext of the Markov chains, \cite{cole} studies the equilibria of a repeated game
 in  dependence of the memory of the players. 
The authors consider that the action or strategy at 
any time depends only on the public knowledge related
to the previous $K$ stages of the game (memory $K$). 
Hence the players construct equilibria collected in  $\Gamma_K$. 
The space of equilibria $\Gamma_K$ is analyzed and compared with 
$\Gamma_\infty$, i.e. the equilibria constructed using all the past history
 (unbounded memory).

In \cite{HMP} a one-period game with unitary memory is studied ($K=1$) 
and a Folk Theorem for bargaining games is presented. 

For what concerns the memory of the repeated games, in our context we prove that the connectedeness of $G$ leads to $\Gamma_K=\Gamma_\infty$, for each $K \geq 1$. Moreover, 
we are able to derive a new version of the Folk Theorem tailored to our specific context (see also versions of Folk Theorems in  \cite{aush, benoit, chen, deb, maskinfudenberg, laclau}). In particular, we
deal with the case $T=\infty$. Differently with the standard case of
infinitely repeated games without discounting -- where Folk Theorem
states simply that any equilibrium of the one-shot game is also an
equilibrium for the repeated game (see the seminal contribution of
\cite{rubinstein}) -- we here introduce a natural constraint to let
such equilibria be consistent with the graph $G$ of the strategies.

In illustrating how the paper flows, some details on the treated topics are also provided. In particular, Section 2 contains the preliminary and notations which serve for formalizing the game models we deal with. We denote such game models as $G$-games, to point the attention on the graph of the strategies $G$, and assume that $G$ is a finite graph. Moreover, such a section is devoted to the definition of the (pure) $\calC$-equilibria for games with coalition structure $\calC$. The connection between $G$-games and standard games is also discussed.

Section 3 is a technical one. It discusses the definition of (mixed) $\calC$-equilibria in a static context. By construction, such equilibria are independent from the graph $G$ and depend only on the coalition structure $\calC$. Theorem \ref{th3} is an existence result for mixed $\calC$-equilibria for $G$-games, that is an arrangement with our notation of \cite{berge1957theorie,Nash51}.

Section 4 is divided into two subsections. 
In the first one, we focus on 
MCMC problems when Markov
chains are linked to graphs. Indeed, the presence of an adjacency constraint over the strategies leads to a constrained version of MCMC, in the sense that we will admit only nonnull transition probabilities between two adjacent states (strategies). In this context, we are able to say whenever, given a distribution $\mu$, it is possible to construct a homogenenous or a nonhomogeneous Markov chain having $\mu $ as empirical distribution (see Theorems \ref{dinamic2} and \ref{thdin2}). 
A definition of a specific class of graphs -- the $\calC$-decomposable graphs -- is introduced in Definition \ref{decomponibile} on the basis of the strong products of graphs (see \cite{sabidussi}).  This definition will be used to introduce the repeated $G$-games.

In particular, $\calC$-decomposability leads to the possibility that each coalition of players might select a strategy only on the basis of the knowledge of the past history of the game, without the need of assuming communications among coalitions. Moreover, in this case, we do a specific construction of the MCMC that is  computationally less heavy than in the general case  (see Theorem \ref{dinamic3}).  

The second subsection represents the conclusion of the arguments developed in the previous parts of the paper. In fact, we introduce therein a dynamical setting by providing the definition of repeated $G$-games (see the general Definition \ref{ripetuti} and the more specific Definition \ref{def:repeated}). Here we are able to observe that the (pure) $\calC$-equilibria might turn out to be meaningful when a game is played $T=2$ times or when $T$ is unknown -- where \emph{unknown} should be intended in the sense that the coalitions cannot do any prediction about the end of the game.

Differently, the (mixed) $\calC$-equilibria can be used to prove a version of the Folk Theorem in the context of repeated $G$-games
with $T=\infty$
and under some conditions on the information. In particular, Definition \ref{def:CGR} formalizes
$\calC$-equilibria for the $G$-repeated games when $T=\infty$. Then,
Theorem \ref{thm:calM} states that the multidimensional Markov chain
introduced in the previous subsection can be viewed as a
$\calC$-equilibrium for the $G$-repeated games with $T=\infty$,
minimal information -- i.e., coalitions have knowledge only of their
previously selected strategies -- and in both cases of initial
strategies assigned by an external referee or selected by players
themselves. As a corollary of Theorem \ref{thm:calM} we have the
above-mentioned new version of the Folk Theorem, which guarantees
the existence of a $\calC$-equilibrium for the $G$-repeated games in
the considered framework.

 Last section provides some conclusive remarks and traces lines for future research.

\section{Definition of a $G$-game}\label{Sec2}

Consider a set of $n$ players $V=\{ 1, \ldots , n \}$.
The $j$-th player has a set of $k_j$ (pure) strategies collected in
$\calS_j = \{   s_{j}^1,    \ldots ,      s_{j}^{k_j}  \}$. We
denote the product space of the strategies as
$$
\calS = \prod_{j \in V}\calS_j ,
$$
and an element $s \in \calS$ is a \emph{profile of strategies}.  For
$j \in V$, $\pi_j : \calS \to \R $ denotes the payoff function of
the $j$-th
 player. Hence, for $s  =(s_1, \ldots , s_n )\in \calS $, $\pi_j (s)$ is the payoff of the $j$-th
 player  when the players use the profile of strategies $s$. The \emph{vector of payoffs} is $\pi=(\pi_1, \ldots, \pi_n)$.

For convenience we introduce a notation for strategies substitution.
For a given set $C \subset V$ and $s,t \in \calS$ we
define the profile of strategies $[s, t; C] \in \calS$, by setting,
for its $j$-th component,
$$
[s, t; C] _j= \left \{ \begin{array}{cc}
                         t_j & \text{if } j \in C;  \\
                         s_j &  \text{if } j \not \in C.
                        \end{array} \right .
$$

A set $C \subset V$ of players is called 
\emph{coalition}. We are interested in the coalitions which form a partition
$\calC=\{C_1, \ldots, C_r\}$ of $V$ and we will call this partitition  
a \emph{coalition structure}. We consider games
which present a \emph{coalition structure}.

The payoff of a coalition $C$ is a function
$\Pi_C:\calS \to \R$. Sometimes, it is natural to consider $\Pi_C$
as the sum of the payoffs of the single players belonging to $C$,
i.e. $\Pi_C(s) =\sum_{i \in C}\pi_i(s)$, for $s \in \calS$.

For a given coalition $C \subset V$ we write the set of the pure
strategies of $C $ as
$$
\calS_C = \prod_{\ell \in C} \calS_{\ell} . 
$$
Let us consider a $G$-game with coalition structure $\calC =\{ C_1,
\ldots , C_r\}$. 
Let $s_{C_\ell} \in \calS_{C_\ell}$ for $\ell =1, \ldots , r$.
The profile of strategies can be written by $s =(s_{C_1} , \ldots , s_{C_r} ) \in \calS
$.  In this formalism the strategy of the $j$-th player can be obtained as the projection on $S_j $ of $s_{C_{\bar \ell}}$ where $\bar \ell \in \{ 1,2, \ldots , r  \} $ is the unique element having $j \in C_{\bar \ell }$.

The vector of the payoffs of the coalitions  $\calC$ is denoted by  $\Pi=(\Pi_{C_1}, \ldots,\Pi_{C_r})$.

\medskip

Hereafter, we consider that the elements of  $\calS$ are nodes of a
graph $G =(\calS, E )$. Some notations for graphs are now presented.
Given two graphs $G=(\calS, E)$ and $G'=(\calS', E')$ we say that
$G'$ is a \emph{subgraph} of $G$ if $\calS'\subset \calS$ and
$E'\subset E$, and we write $G' \subset G$. Moreover, the subgraph
$G' \subset G$ is said to be an \emph{induced subgraph} of $G$ if
$s,t \in \calS'$ and $\{ s,t   \} \in E$ imply  $\{ s,t   \} \in E'
$. In this case we write $G' = G [\calS']$.

The profiles of strategies  $ s, t \in \calS$ are declared
\emph{adjacent} in $G$ if $\{s,t\}\in E$ or $s=t$.
 We define a $G$-\emph{game} with coalition $\calC $ as the quadruple $(V, \calC , \Pi, G) $, where $\Pi $ are the payoffs of the coalitions $\calC$.

A key concept of the present study is the equilibrium of the
$G$-game in presence of coalitions.

\begin{definition}\label{C-equilibrium}
Given a graph $G=(\calS,E)$ and a $G$-game $(V, \calC, \Pi    , G) $ where  $\calC$ is a partition of
$V$, we say that $\bar{s} \in \calS$ is a pure
\emph{$\mathcal{C}$-equilibrium} for the $G$-game if
\begin{equation}\label{Ceq}
  \Pi_C (\bar{s}) \geq \Pi_C ([ \bar{s},
s ; C]    ),
\end{equation}
for any $C \in \calC$  and any $s \in \calS $ such that $\{s,\bar s\} \in E$.
\end{definition}

Sometimes we refer to pure $\calC$-equilibria simply as $\calC$-equilibria.

Notice that the concept of $\calC$-equilibrium in Definition
\ref{C-equilibrium} coincides with the classical Berge equilibrium 
(see \cite{berge1957theorie})  when the considered graph $G$ is complete. Indeed,
Berge equilibrium is a Nash one for coalitions, without any
structure of the set of the strategies. Some easy remarks follow.

\begin{rem}\label{primorem}
If all the nodes of the graph $G $ are isolated, then any $s \in
\calS $ is a $\calC$-equilibrium.

If the graph $G $ is  complete and $\calC =\{ \{1\} , \ldots , \{n\}
\}$ then all the couples of elements of $\calS$ are formed by
adjacent strategies and the $G$-game becomes the standard game
$(V,\calS,\pi)$. In this case, $s \in \calS$ is a
$\calC$-equilibrium for the $G$-game $(V, \calC,\Pi, G)$ if and only
if it is a pure Nash equilibrium for $(V, \calS, \pi)$. If the graph $G
$ is complete and $\calC =\{V \}$ with 
$$
\Pi_V (s)= \sum_{i \in V} \pi_i  (s) , \hbox{ for } s \in \calS . 
$$
Then any  $\calC$-equilibrium for the $G$-game is a Pareto
optimal solution for the game $(V,\calS,\pi)$. 
The finiteness of $\calS $
 guarantees that each $G$-game admits a $\calC$-equilibrium when $\calC =\{V \}
 $ and thetotal  payoffof $V$ is the sum of the individual payoffs.

Thus, our definition of $\calC$-equilibrium for a $G$-game is not
only more general than Berge equilibrium, but it is also a
generalization of the Nash equilibrium and Pareto optimal solution for a game.
\end{rem}

For a $G$-game with a coalition structure $\calC$, we collect the $\calC$-equilibria in the set $\calE_\calC $. The set 
$\calE_\calC $  contains the set of the pure Berge equilibria but it can be empty.

\section{Mixed $\calC$-equilibria}

In this section we introduce and analyse the  mixed
equilibria for a $G$-game. The context is static, in the
sense that the game is played only one time. Therefore, it will be clear that all the mixed $\calC$-equilibrium,  here presented, will not depend on the choice of the
graph $G$. However, the relevance of the graph will be clear on the section dealing
with the repeated $G$-games. This part is analogous to Berge (for coalitions) or
Nash mixed equilibria  (see \cite{berge1957theorie, Nash51}); it is presented only for ease of
reading the paper. We notice that in \cite{berge1957theorie, Nash51} the pure equilibria
are also mixed equilibria. As we will see in our presentation this is not longer true.

A mixed strategy for a coalition $C \in \calC$ is a
distribution on $  \calS_C    $, and we denote it by $\Lambda_C =(
\lambda_{C } (s) : s \in  \calS_C  ) $.
In presence of mixed strategies,   the payoff of a coalition $C $ will be a random variable.
 Reasonably, we will also consider
that the single coalitions act independently one each other because they are not comunicating. Thus,
the choice of the individual mixed strategies for all the
coalitions $C_1, \ldots, C_r$ fixes also a product distribution on
$\calS$ for the game. Hence, the expected payoff for coalition $C \in \calC $ is
\begin{equation}\label{expected}
    \mathbb{E}_{\Lambda_{C_1} \times \ldots \times
    \Lambda_{C_r}} (\Pi_C ) = \sum_{s_{C_1 } \in \calS_{C_1}} \cdots
\sum_{s_{C_r } \in \calS_{C_r}}    \Pi_C
(s_{C_1} , \ldots ,  s_{C_r} ) \left
[\prod_{\ell=1}^r \lambda_{C_\ell } (s_{C_\ell}  )\right ],
\end{equation}
where $\mathbb{E}_{\Lambda_{C_1}\times \ldots \times \Lambda_{C_r}}$
is the expected value with respect to the product distribution
$\Lambda_{C_1}\times \ldots \times \Lambda_{C_r}$ and $\Lambda_{C_\ell}$, for $\ell =1, \ldots , r$,  
is the distribution selected by the coalition $C_\ell$.

\begin{definition}\label{mixed}
Let us consider a $G$-game $(V,\calC, \Pi,G) $ with coalition
structure $\calC =\{ C_1, \ldots , C_r\}$. A mixed
$\calC$-equilibrium for the $G$-game is a product distribution $\bar
\Lambda=\prod_{h=1}^r\bar \Lambda_{C_h} $, where
\begin{equation}\label{mixdefstr}
\mathbb{E}_{\bar \Lambda_{C_1} \times \ldots \times \bar
\Lambda_{C_r}} (\Pi_{C_\ell}) \geq     \mathbb{E}_{\bar
\Lambda_{C_1}\times \ldots \times \bar \Lambda_{C_{\ell-1}}\times
\Lambda_{C_\ell}\times \bar \Lambda_{C_{\ell+1}}\times \ldots\times
\bar \Lambda_{C_{r}}}(\Pi_{C_\ell}),
\end{equation}
for any $\ell \in \{1, \ldots, r\}$ and for any distribution $ \Lambda_{C_\ell}$
  on the space $\calS_{C_\ell}$.

We collect
 all the mixed $\calC$-equilibria in the set $\calM_\calC $. 
\end{definition}


As already announced above, Definition \ref{mixed} does not depend
on the presence of the graph $G$, and can be provided for a generic
game whose strategies are not nodes of a graph. However, the reference to $G$-games will be
useful in the next Section.
\begin{theorem}\label{th3}
For a given $G$-game $(V,\calC, \Pi, G) $ there exists a mixed $\calC$-equilibrium.
\end{theorem}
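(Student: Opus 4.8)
The plan is to observe that Definition \ref{mixed} is nothing but the definition of a mixed Nash equilibrium for an auxiliary finite game in which the coalitions play the role of the players. Concretely, I would introduce the strategic-form game $\Gamma$ whose player set is $\{1,\dots,r\}$, in which ``player'' $\ell$ has the finite pure-strategy set $\calS_{C_\ell}$ and payoff function $\Pi_{C_\ell}$; since $\calS=\prod_{\ell}\calS_{C_\ell}$ and $\calS$ is finite, each $\calS_{C_\ell}$ is finite and $\Gamma$ is a finite game. A mixed profile in $\Gamma$ is exactly a product distribution $\bar\Lambda=\prod_{h=1}^{r}\bar\Lambda_{C_h}$ as in the statement, because the coalitions act independently, and inequality \eqref{mixdefstr} is precisely the assertion that no single coalition $C_\ell$ can raise its expected payoff by a unilateral change of $\Lambda_{C_\ell}$. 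Thus a mixed $\calC$-equilibrium is the same object as a Nash equilibrium of $\Gamma$, and existence follows from Nash's theorem \cite{Nash51}.

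To make the argument self-contained I would reprove existence by a fixed-point theorem. Let $\Delta_\ell$ be the probability simplex over $\calS_{C_\ell}$ and work on the compact convex set $\Delta=\prod_{\ell=1}^{r}\Delta_\ell\subset\R^{|\calS_{C_1}|}\times\cdots\times\R^{|\calS_{C_r}|}$. By \eqref{expected} the expected payoff $\mathbb{E}_{\Lambda_{C_1}\times\cdots\times\Lambda_{C_r}}(\Pi_{C_\ell})$ is a polynomial which is multilinear in the blocks $\Lambda_{C_1},\dots,\Lambda_{C_r}$; in particular it is continuous on $\Delta$ and, crucially, linear in the single block $\Lambda_{C_\ell}$ when the others are fixed. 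I would then define the best-response correspondence $B:\Delta\rightrightarrows\Delta$ by letting $B(\Lambda)$ be the set of profiles whose $\ell$-th block maximizes coalition $\ell$'s expected payoff against the remaining blocks of $\Lambda$, for every $\ell$, and apply Kakutani's fixed-point theorem: a fixed point of $B$ satisfies \eqref{mixdefstr} for each $\ell$ and hence belongs to $\calM_\calC$.

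The verification of Kakutani's hypotheses is the only point requiring care, and it is where the structure of \eqref{expected} is used. Nonemptiness of $B(\Lambda)$ follows from compactness of $\Delta$ and continuity of the payoffs; convexity of each block-maximizer set is exactly the linearity of $\mathbb{E}(\Pi_{C_\ell})$ in the block $\Lambda_{C_\ell}$, so $B(\Lambda)$ is convex; and upper hemicontinuity with closed graph follows from the joint continuity of the expected payoffs together with Berge's maximum theorem. I do not expect a genuine obstacle: the whole content is the reduction to a finite game among coalitions and the multilinearity of expected payoffs, after which the Nash (or Kakutani) machinery applies verbatim. The one modelling point that must be flagged is that coalitions randomize independently, so that the relevant objects are product distributions $\prod_{h}\bar\Lambda_{C_h}$ rather than arbitrary joint distributions on $\calS$; this is precisely what makes $\Delta$ a product of simplices and keeps the reduction to Nash's setting exact.
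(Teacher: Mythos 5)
Your proposal is correct and takes essentially the same route as the paper: the paper's proof is exactly your first paragraph, namely viewing each coalition $C_\ell$ as a single player with strategy set $\calS_{C_\ell}$ and payoff $\Pi_{C_\ell}$ and invoking Theorem 1 of \cite{Nash51}. Your additional Kakutani fixed-point argument merely re-derives Nash's theorem and is not needed beyond the citation.
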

\begin{proof}
Let us see the coalition $C \in \calC $ as a single player having space 
of strategies given by $\calS_C$.   The payoff of the player identified with the coalition $C$ is $\Pi_C$. Therefore we are dealing with a standard game as defined by Nash in \cite{Nash51}. 
By applying Theorem 1 of \cite{Nash51} one obtains the existence of a
product distribution  $\bar \Lambda = \prod_{i=1}^r \bar
\Lambda_{C_i}$ which satisfies \eqref{mixdefstr}.
\end{proof}

For a given $G$-game $(V, \calC, \Pi , G)$,  let us define 
$$
\hat \calM_\calC =\{(s_{C_1}, \ldots, s_{C_r}) \in \calS : \delta_{s_{C_1}} \times \ldots \times \delta_{s_{C_r}} \in \calM_\calC  \} . 
$$
We notice that $\calE_\calC \cap \hat \calM_\calC$ is the set of the pure Berge equilibria. Furthermore, in general, we have that $\calE_\calC \not \subset \hat  \calM_\calC$ and $ \hat \calM_\calC \not \subset \calE_\calC$. 


\section{Monte Carlo Markov Chain on graphs and repeated $G$-games}

This section deals with a constrained MCMC procedure in presence of
graphs and its application to the \emph{repeated
$G$-games}, which will be defined below. The MCMC part can be treated separately and it has a relevance and an interest in itself.

\subsection{MCMC on graphs}
We deal with a MCMC problem. In particular, we construct some Markov chains which are linked
to the graph $G$.

\begin{definition}\label{dinamica}
We say that a stochastic process  $X =( X(t) : t \in \mathbb{N})$ on
$\calS$ is \emph{consistent} with the graph $G=(\calS, E) $ if, for each $t\in \mathbb{N}$,      $ X(t)$
and $ X(t+1) $ are adjacent in $G$ with probability one. 
\end{definition}

We notice that if a process $X=( X(t) : t \in \mathbb{N})$ is
consistent with a graph $G $ then it is also consistent with any
graph $G' \supset G$.


Given  a finite graph $G=(\mathcal{S},E)$ and a  distribution
$\mu=(\mu(s):s \in \calS)$,  we will provid an answer to the following question:

\begin{itemize}
\item[\textbf{Q}] \emph{Is it possible to construct a (not
necessarily homogeneous) Markov chain $X =( X(t) : t \in
\mathbb{N})$ which is consistent with $G$ and such that its
empirical distribution converges almost surely to $\mu$ as $t$ goes to infinity?}
\end{itemize}

Thus, we want to construct  a Markov chain $X =( X(t) : t \in
\mathbb{N})$  with the following properties:
 $X$ is consistent with the graph $G$ and 
 \begin{equation}\label{limite1bis} 
\lim_{t \to \infty}  \frac{1}{t} \sum_{m=0}^{t-1} \mathbf{1}_{\{ X
(m)=s \}} = \mu(s)  , \qquad s \in \calS \qquad a.s..
\end{equation}

We provide an answer to question \textbf{Q}  in all 
possible situations and we show that when $G$ is connect it is possible to construct such a Markov chain. 
Moreover, in constructing
such a Markov chain, we are in the framework of the MCMC theory, even if
here the Markov chain is constrained to have transitions only between adjacent states of  $G$.

All the possible situations, along with the related answers to question \textbf{Q}, can be distinguished in four cases:
\begin{itemize}
\item[$(i)$] If the distribution $\mu$ is concentrated on a unique $\bar s \in \calS$, i.e. $\mu=\delta_{\bar
s}$, then one can construct the constant Markov chain $X=(X(t):t \in
\mathbb{N})$ such that $X(t)=\bar s$, for each $t$. By Definition
\ref{dinamica} and the concept of adjacent states, one has that $X$
is consistent with $G$ and \eqref{limite1bis} is trivially satisfied.
\item[$(ii)$] If $G[\supp{\mu}]$ is not connected but $\supp{\mu}$ is contained in a connected
component of $G$, then one can construct a nonhomogeneous Markov
chain which is consistent with $G$ and fulfilling condition \eqref{limite1bis}
(see Theorem \ref{dinamic2} below).
\item[$(iii)$] If $G[\supp{\mu}]$ is not connected and $\supp{\mu}$ is not contained in a unique connected component of $G$, then it does not exist a stochastic
process which is consistent with $G$ and fulfilling
\eqref{limite1bis} (see Theorem \ref{thdin2} part b. below). 
\item[$(iv)$] If $G[\supp{\mu}]$ is connected, then one
can construct a homogeneous Markov chain consistent with $G$ which
satisfies \eqref{limite1bis} (see Theorem \ref{thdin2} part a. below).
\end{itemize}

We now deal with item  $(ii)$. 

Notice that, in this case, there exists a connected
component of $\calS$, say $\hat \calS$, such that $\supp{\mu}
\subset \hat \calS$ and $\supp{\mu}
\neq \hat \calS $. Without loss of generality and to avoid
the introduction of further notation, we assume that $G$ is
connected and we identify  $\hat \calS $ 
with $ \calS$.

For a given distribution $\mu=(\mu(s):s \in \calS)$, let us define, in case $(ii)$, the non-empty set 
$$
\calA_k=\left \{s \in \calS: \mu(s) <\frac{1}{k} \right \},\qquad k
\geq 1
$$
and let the distribution $\eta_k=(\eta_k(s):s \in \calS)$ be
$$
\eta_k (s) = \frac{1}{|\calA_k|} \mathbf{1}_{\{ s \in \calA_k \}},
\qquad s \in \calS,
$$
i.e. $\eta_k$ is the uniform distribution on $\calA_k$. We also
define the distribution $\mu_k=(\mu_k(s):s \in \calS)$ as
\begin{equation}
\label{mukka} \mu_k = \frac{1}{k} \eta_k + \frac{k-1}{k} \mu .
\end{equation}
Notice that
$$
|| \mu_k  - \mu ||_{TV} =  \frac{1}{k}     || \eta_k  - \mu ||_{TV}
\leq \frac{1}{k} ,
$$
where $  || \cdot ||_{TV} $ is the total variation norm (see e.g. \cite{lindvall}).

Let $N$ denote the cardinality of $\calS$. Since $\supp{\mu}$ is not connected in $G$, then it contains at least two points. Since $\supp{\mu} \subset \calS$ and $\calS$ is connected, then $N \geq 3$. 
  By construction, for $k $  large enough and since $ N \geq 3$, one
has that
\begin{equation}\label{31ott}
\mu_k (s) \geq \frac{1}{(N-1) k}, \qquad s \in \calS .
\end{equation}
Let us label the elements of $\calS=\{ s^1, \ldots , s^{N} \}$ such that 
$$
\mu( s^1) \geq \mu( s^2)\geq \cdots \geq \mu( s^N) .
$$
Let us take an integer $k $ such that 
$$
k >  \frac{1}{  \min \{ \mu
(s) >0 : s \in \calS\}} .
$$
According to definition \eqref{mukka}, with the previous selection of $k $,  one has 
\begin{equation}\label{muk}
    \mu_k( s^1) \geq \mu_k( s^2)\geq \cdots \geq \mu_k( s^N) >0 .
\end{equation}

We now construct the transition matrix $P^{(\mu_k,G)}= (p_{l, r}: l,r = 1,
\ldots , N)$ related to the distribution $\mu_k$ and to the graph
$G=(\calS,E)$.  For each $l,m=1,\dots,N$, 
\begin{equation}\label{ordinati}
    p_{l,m } = \left\{
\begin{array}{ll}
    p, & \hbox{if $l<m$ and $\{s^l,s^m\} \in E$;} \\
    \frac{\mu_k(s^m)}{\mu_k(s^l)}p, & \hbox{if $l>m$ and $\{s^l,s^m\} \in E$;} \\
  p_{l}, & \hbox{if $l=m$;} \\
    0, & \hbox{otherwise,} \\
\end{array}
\right.
\end{equation}
where
$$
 p_{l} =1- p[\sum_{m': m'> l } \mathbf{1}_{\{ \{s^l,s^{m'}\} \in E \}}
 + \sum_{m':m'<l}\frac{\mu_k({s^{m'}})}{\mu_k(s^l)}
\mathbf{1}_{\{ \{s^l,s^{m'}\} \in E \}} ]
$$
and
\begin{equation}\label{p}
p= \min_{l=1, \ldots , N}\frac{1}{2\left(\sum_{m':m'>l}
\mathbf{1}_{\{ \{s^l,s^{m'}\} \in E \}}
+\sum_{m':m'<l}\frac{\mu_k(s^{m'})}{\mu_k(s^l)} \mathbf{1}_{\{
\{s^l,s^{m'}\} \in E \}} \right)}   .
\end{equation}
Notice that by definition $p\leq \frac{1}{2}$. In fact, since $G$ is connected, there exists
at least an edge $\{ s^1,s^{m}\} \in E$, with $m > 1$; thus the
denominator of \eqref{p} is at least equal to $  2$, when $l =1$.
The transition matrix
$P^{(\mu_k,G)}$ is well defined, since $G$ is connected.

 Formula \eqref{ordinati} assures that the couple
$(\mu_k, P^{(\mu_k,G)})$ is reversible.  Moreover, $P^{(\mu_k,G)} $
is irreducible, since $G$ is connected, thus $\mu_k$ is the unique
invariant distribution of $P^{(\mu_k,G)}$. One can also see that
$P^{(\mu_k,G)} $ is aperiodic since, by construction, $p_{l} \geq
\frac{1}{2}$ for $l=1, \ldots , N$.

We introduce the ergodic coefficient of Dobrushin  (see \cite{dobru} and  \cite{bremo} p. 235),
which is defined as 
\begin{equation}\label{dobru}
    \delta(P)=1-\inf_{i,j =1, \dots, N} \sum_{h =1}^Np_{i,h}\wedge p_{j,h}
\end{equation}
where $P=(p_{i,j}:i,j =1, \dots, N)$ is a stochastic matrix. 
%
\begin{lemma}
\label{lem:D} Given the transition matrix   $P^{(\mu_k,G)}$ on $\calS$ constructed above, with $N = |\calS| \geq 3$, 
the Dobrushin's ergodic coefficient can be bounded from above as follows
$$
\delta((P^{(\mu_k,G)})^{N-1}) \leq 1- \left (    \frac{c_N}{k}
\right )^{N-1}
$$
where  $c_N =\frac{1}{2(N-1)^2} $ and $k $ is large enough.
\end{lemma}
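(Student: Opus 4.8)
The plan is to exploit the defining formula \eqref{ordinati} to show that \emph{every} nonzero off-diagonal entry of $P^{(\mu_k,G)}$ is bounded below by the single number $p$ of \eqref{p}, and then to produce, for each ordered pair of starting states, one common target state that is reached in exactly $N-1$ steps from both with probability at least $p^{N-1}$. Recall Dobrushin's coefficient satisfies $\delta(Q)=1-\inf_{i,j}\sum_h q_{i,h}\wedge q_{j,h}$, so exhibiting a single index $h$ with $q_{i,h}\wedge q_{j,h}\geq p^{N-1}$ for $Q=(P^{(\mu_k,G)})^{N-1}$ immediately yields $\delta((P^{(\mu_k,G)})^{N-1})\leq 1-p^{N-1}$. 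The first key observation is that in \eqref{ordinati} a move to a higher index has probability exactly $p$, while a move $l\to m$ with $m<l$ has probability $\frac{\mu_k(s^m)}{\mu_k(s^l)}p\geq p$ because the labelling \eqref{muk} gives $\mu_k(s^m)\geq\mu_k(s^l)$; hence every edge transition has probability $\geq p$, and the diagonal entries satisfy $p_l\geq\frac12\geq p$.

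Next I would establish the lower bound $p\geq c_N/k$ with $c_N=\frac{1}{2(N-1)^2}$. Writing $D_l$ for the bracketed denominator in \eqref{p}, the first sum over $m'>l$ is at most $N-1$, while for the second sum the crucial step is to use $\sum_{m'}\mu_k(s^{m'})\leq 1$ together with \eqref{31ott}, so that
\[
\sum_{m'<l}\frac{\mu_k(s^{m'})}{\mu_k(s^l)}\mathbf{1}_{\{\{s^l,s^{m'}\}\in E\}}\leq\frac{1}{\mu_k(s^l)}\leq (N-1)k.
\]
Therefore $D_l\leq 2\bigl((N-1)+(N-1)k\bigr)\leq 4(N-1)k$ for $k\geq 1$, giving $p\geq\frac{1}{4(N-1)k}$. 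Since $N\geq 3$ forces $4(N-1)\leq 2(N-1)^2$, this yields $p\geq\frac{1}{2(N-1)^2 k}=\frac{c_N}{k}$, as required. The hypotheses ``$N\geq 3$'' and ``$k$ large enough'' enter precisely here, the latter being what makes \eqref{31ott} valid.

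Finally I would invoke connectedness of $G$: its diameter is at most $N-1$, so from any state $s^i$ there is a path $s^i=a_0\to a_1\to\cdots\to a_d=s^1$ in $G$ of length $d\leq N-1$ ending at the fixed target $s^1$. Pad it to length exactly $N-1$ by taking the self-loop at $s^1$ for the remaining $N-1-d$ steps; each edge step contributes a factor $\geq p$ and each self-loop a factor $p_1\geq\frac12\geq p$, so the probability of this particular trajectory is at least $p^{d}\,(\tfrac12)^{N-1-d}\geq p^{N-1}$, using $p\leq\frac12$. Consequently $\bigl((P^{(\mu_k,G)})^{N-1}\bigr)_{i,s^1}\geq p^{N-1}$ for \emph{every} $i$, so taking $h=s^1$ gives $\sum_h q_{i,h}\wedge q_{j,h}\geq p^{N-1}\geq(c_N/k)^{N-1}$ for all $i,j$, and the stated bound on $\delta((P^{(\mu_k,G)})^{N-1})$ follows. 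The main obstacle is the second step: obtaining the clean constant $c_N=\frac{1}{2(N-1)^2}$ requires the sharp estimate on the second sum via $\sum_{m'}\mu_k(s^{m'})\leq 1$ rather than a crude termwise bound, and then checking that the numerical inequality $4(N-1)\leq 2(N-1)^2$ holds exactly from $N\geq 3$.
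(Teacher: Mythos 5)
Your proof is correct and follows essentially the same route as the paper: lower-bound $p$ by $c_N/k$, observe that every nonzero transition probability (including the lazy diagonal entries $p_l\geq \tfrac12$) is at least $p$, and use connectedness plus padding by self-loops to lower-bound $(N-1)$-step transition probabilities before invoking the definition of the Dobrushin coefficient. The only cosmetic differences are that the paper gets $p\geq c_N/k$ from the termwise bound $\mu_k(s^{m'})/\mu_k(s^l)\leq k(N-1)$ applied to each of the at most $N-1$ summands (which already yields the constant $c_N=\tfrac{1}{2(N-1)^2}$, so your ``sharper'' estimate via $\sum_{m'}\mu_k(s^{m'})\leq 1$ is a valid but unnecessary refinement), and that the paper bounds all entries of $(P^{(\mu_k,G)})^{N-1}$ from below rather than only the column of a single common target state.
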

\begin{proof}
For $k $ large enough, 
condition  $N \geq 3 $, and inequalities  \eqref{31ott} and \eqref{muk} provide
\begin{equation}\label{musumu}
    1 \leq \frac{\mu_k(s^{m})}{\mu_k(s^l)} \leq k(N-1), \qquad \text{
for } l>m.
\end{equation}
Thus, by 
\eqref{musumu}  one obtains
$p \geq \frac{c_N}{ k}$, for $k $ large enough. 
 Then one has that, if $p_{l,m}\not=0$, 
\begin{equation}
\label{p2} p_{l,m} \geq  \frac{c_N}{k}  .
\end{equation}
For $k $ large enough, 
 since the graph $G$ is connected and $p_{l} \geq \frac{1}{2}$  for each $l = 1, \ldots , N$, 
then \eqref{p2} gives that
$$
p_{l,m}^{(N-1)} \geq     \left (    \frac{c_N}{k}    \right )^{N-1}
, \qquad l,m =1, \dots, N,
$$
where $p_{l,m}^{(N-1)}$ is the transition probability from $s^l$ to
$s^m $ in $(N-1)$ steps.
\newline
Then, by definition of the ergodic coefficient of Dobrushin in \eqref{dobru}, one has the thesis.
\end{proof}

For a given distribution over $\calS$, namely $\lambda
=(\lambda(s): s\in \calS)$,  we construct a non-homogeneous Markov
chain $X=(X(t):t \in \mathbb{N})$ with $\lambda$ as initial
distribution.  The transition matrix of the Markov chain $X$ at time $t \in \mathbb{N}$
will be  denoted by $P(t)=(p_{i,j} (t):i,j=1, \dots,N)$. 

Let us consider
an  increasing sequence of times $(t_\ell:\ell \in
\mathbb{N})$, and pose 
\begin{equation}\label{Penne}
    P(t) = \sum_{k =1}^{\infty} P^{(\mu_k,G)} \mathbf{1}_{\{ t \in [t_k
, t_{k+1} ) \}} .
\end{equation}

%

\begin{theorem}\label{dinamic2}
Given a connected graph $G=(\calS,E) $ and a distribution
$\mu=(\mu(s): s\in \calS)$, any Markov chain $X =( X(t) : t \in
\mathbb{N})$ constructed above with sequence $(t_\ell:\ell \in \N)$ and  $
t_\ell=\ell^{5N}$, for $\ell \in \N$, satisfies \eqref{limite1bis}.
\end{theorem}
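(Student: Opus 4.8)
The plan is to prove the statement in two steps: first I would establish that the law of $X(t)$ converges to $\mu$ in total variation, and then I would upgrade the resulting convergence of the time-averaged expectations to an almost sure statement by a second moment estimate. Throughout, write $\nu_t$ for the distribution of $X(t)$, so that $\nu_{t+1}=\nu_t P(t)$ with $P(t)$ given by \eqref{Penne}, and write $Y_m=\mathbf{1}_{\{X(m)=s\}}$, so that $\mathbb{E}[Y_m]=\nu_m(s)$. The two tools I would use repeatedly are the contraction estimate $\|\nu P-\nu' P\|_{TV}\le \delta(P)\,\|\nu-\nu'\|_{TV}$ and the submultiplicativity $\delta(PQ)\le \delta(P)\delta(Q)$ of the Dobrushin coefficient, together with the quantitative bound of Lemma \ref{lem:D}.

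For the first step, I would exploit the phase structure of the construction: on the block of times $[t_k,t_{k+1})$ the chain evolves with the single reversible matrix $P^{(\mu_k,G)}$, whose unique invariant law is $\mu_k$. Since $\delta(P^{(\mu_k,G)})\le 1$, the quantity $\|\nu_t-\mu_k\|_{TV}$ is non-increasing in $t$ on this block, and by Lemma \ref{lem:D} it contracts by a factor at most $1-(c_N/k)^{N-1}$ over every $N-1$ steps. Hence at the end of the $k$-th block
$$
\|\nu_{t_{k+1}}-\mu_k\|_{TV}\le \Bigl(1-\bigl(c_N/k\bigr)^{N-1}\Bigr)^{\lfloor (t_{k+1}-t_k)/(N-1)\rfloor}.
$$
With $t_\ell=\ell^{5N}$ one has $t_{k+1}-t_k$ of order $k^{5N-1}$, so the exponent of the contraction is of order $k^{5N-1}\cdot k^{-(N-1)}=k^{4N}\to\infty$, whence the right-hand side tends to $0$. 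Combining this with $\|\mu_k-\mu\|_{TV}\le 1/k$ and the monotonicity within each block, a triangle inequality shows $\|\nu_t-\mu\|_{TV}\to 0$; in particular $\nu_m(s)\to\mu(s)$ and therefore $\frac1t\sum_{m=0}^{t-1}\mathbb{E}[Y_m]\to\mu(s)$ by the Ces\`aro mean theorem.

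For the second step, set $Z_t=\sum_{m=0}^{t-1}(Y_m-\mathbb{E}[Y_m])$; it remains to show $Z_t/t\to0$ almost surely. For $m<m'$ the Markov property gives $\mathrm{Cov}(Y_m,Y_{m'})=\nu_m(s)\bigl([\delta_s\Pi]_s-[\nu_m\Pi]_s\bigr)$ with $\Pi=\prod_{i=m}^{m'-1}P(i)$, so the contraction estimate yields $|\mathrm{Cov}(Y_m,Y_{m'})|\le\delta(\Pi)$. By submultiplicativity the product over a window contracts at least as fast as its restriction to the phase of $m$, and any full intermediate phase contributes a super-polynomially small factor; summing the geometric-type bound of Lemma \ref{lem:D} over $m'$ gives $\sum_{m'>m}|\mathrm{Cov}(Y_m,Y_{m'})|$ of order $k(m)^{N-1}$, where $k(m)$ denotes the phase containing $m$. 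Consequently, for $t=t_{K+1}$,
$$
\mathbb{E}[Z_{t_{K+1}}^2]\le t_{K+1}+2\sum_{m<t_{K+1}}\sum_{m'>m}|\mathrm{Cov}(Y_m,Y_{m'})|\lesssim \sum_{k=1}^{K}(t_{k+1}-t_k)\,k^{N-1}\lesssim K^{6N-1}.
$$
Since $t_{K+1}$ is of order $K^{5N}$, this gives $\mathbb{E}[(Z_{t_{K+1}}/t_{K+1})^2]\lesssim K^{-(4N+1)}$, which is summable in $K$; Chebyshev's inequality and the Borel--Cantelli lemma then force $Z_{t_{K+1}}/t_{K+1}\to0$ almost surely along the subsequence $(t_{K+1})$. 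Finally, for $t\in[t_{K+1},t_{K+2})$ the crude bound $|Z_t-Z_{t_{K+1}}|\le t_{K+2}-t_{K+1}$ together with $(t_{K+2}-t_{K+1})/t_{K+1}\to0$ lets me interpolate and conclude $Z_t/t\to0$ almost surely, which is \eqref{limite1bis}.

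The main obstacle is the covariance bookkeeping in the second step: one must control the Dobrushin coefficient of products $\prod_{i=m}^{m'-1}P(i)$ that straddle several phases with deteriorating (as $k$ grows) per-phase contraction rates, and verify that the polynomial growth rate $t_\ell=\ell^{5N}$ is simultaneously fast enough to make the per-phase mixing overwhelm the weakening contraction (so that $\|\nu_t-\mu\|_{TV}\to0$) and slow enough that the accumulated second moment, divided by $t^2$, remains summable along the blocks. Balancing these two competing exponent constraints is exactly what pins down a rate such as $5N$.
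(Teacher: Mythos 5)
Your proposal is correct, but it follows a genuinely different route from the paper's. The paper also reduces to the block endpoints $t_\ell=\ell^{5N}$ and invokes Lemma \ref{lem:D}, but from there it builds an auxiliary sequence of \emph{independent} random variables $Y(t)$ with $Y(t)\sim\mu_k$ for $t\in[t_k,t_{k+1})$, uses the Dobrushin bound \eqref{Dobr2} together with the maximal coupling to force $X(t)=Y(t)$ on all of $[t_\ell+\ell^{2N},t_{\ell+1})$ with probability $1-o(1)$ summably fast, applies Borel--Cantelli to make this hold eventually a.s., and then gets the block empirical frequencies from large-deviation bounds for i.i.d.\ Bernoulli variables. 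You dispense with the auxiliary process entirely: you first prove $\|\nu_t-\mu\|_{TV}\to0$ by Dobrushin contraction within each phase (which handles the mean of the empirical measure by Ces\`aro), and then control the fluctuations by a second-moment bound on $Z_t=\sum_m(Y_m-\mathbb{E}[Y_m])$ via covariance decay $|\mathrm{Cov}(Y_m,Y_{m'})|\le\delta\bigl(\prod_{i=m}^{m'-1}P(i)\bigr)$, Chebyshev, Borel--Cantelli along $(t_{K+1})$, and the same interpolation step. Your covariance identity and the estimate $\sum_{m'>m}|\mathrm{Cov}(Y_m,Y_{m'})|=O(k(m)^{N-1})$ check out (same-phase terms give a geometric series summing to $O(k^{N-1})$, the adjacent phase likewise, and any $m'$ separated from $m$ by a full phase picks up a factor $\exp(-Ck^{4N})$ that kills the remaining $O(t_{K+1})$ terms), yielding $\mathbb{E}[Z_{t_{K+1}}^2]=O(K^{6N-1})$ against $t_{K+1}^2\asymp K^{10N}$, which is summable after dividing. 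What each approach buys: the paper's coupling isolates all the probabilistic work into a clean i.i.d.\ large-deviation estimate and makes the role of the intermediate targets $\mu_k$ transparent, at the price of constructing the coupling and the events $A_{\ell,i}$, $\hat A_\ell$; your variance route is more elementary (only second moments, no exponential concentration, no auxiliary process) and makes explicit how the exponent $5N$ balances per-phase mixing against accumulated variance, at the price of the more delicate cross-phase covariance bookkeeping you correctly identify as the main obstacle. One presentational caution: the phrase that the product ``contracts at least as fast as its restriction to the phase of $m$'' is too weak when $m$ sits near the end of its phase; the sum there is actually controlled by the contraction accumulated in the phase of $m'$ (or by a full intermediate phase), as your subsequent sentence implicitly uses.
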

\begin{proof}
To prove the result, we first check that 
\begin{equation}\label{limite222}
\lim_{\ell \to \infty}  \frac{1}{t_{\ell}} \sum_{m=0}^{t_{\ell}-1}
\mathbf{1}_{\{ X (m)=s \}} =\mu(s)  , \qquad s \in \calS \qquad
a.s..
\end{equation}
By definition of $(t_\ell:\ell \in \N)$ in the hypotheses, one
has
$$
\lim_{\ell \to \infty} \frac{t_{\ell+1}-t_\ell}{t_{\ell}} =0,
$$
and then \eqref{limite222} is equivalent to \eqref{limite1bis}.

For $\varepsilon >0$ and $s \in \calS$ let us define the sequence of events $(
B_\ell (\varepsilon, s) : \ell \in \N)$ as
\begin{equation}\label{BBX}
  B_{\ell}  (\varepsilon, s) = \left \{ \Big |\mu (s) - \frac{1}{t_{\ell+1} - t_{\ell}}
  \sum_{m= t_\ell}^{t_{\ell+1} -1} \mathbf{1}_{ \{ X(m) =s \}}\Big  | < \varepsilon \right
  \} .
\end{equation}
To obtain \eqref{limite222} it is enough that, for each $
\varepsilon
>0$ and $s \in \calS$ one has
$$
\mathbb{P} \left  ( \liminf_{\ell \to \infty }   B_{\ell}
(\varepsilon, s) 
\right ) =1 .
$$
Now, take the auxiliary independent random variables $ (Y(t):t \in
\N)$ with values on $\calS$ such that $Y(i)$ has distribution
$\mu_k$ if $i \in [t_k , t_{k+1})$ (see \eqref{mukka} for the
definition of $\mu_k$).

Notice that for each initial distribution $\vartheta$ on $\calS$, 
 Lemma \ref{lem:D} and 
Dobrushin's Theorem (see  \cite{bremo}) give that
\begin{equation}\label{Dobr2}
    || \vartheta P(t_\ell)^{\ell^{2N}}-\mu_\ell ||_{TV}
    \leq \delta(P(t_\ell)^{N-1})^{\lfloor \frac{\ell^{2N}}{N-1}
    \rfloor} \leq \left( 1-\left(\frac{c_N}{\ell}\right)^{N-1}   \right )^{ \left \lfloor \frac{\ell^{2N}}{N-1}
 \right   \rfloor} \leq \exp \left (-c_N^{N-1} \left
\lfloor\frac{\ell^{N+1}}{N-1} \right \rfloor \right ), 
\end{equation}
for $\ell $ large enough. 

Let $\hat c_N=c_N^{N-1}$. Given $i \geq 0$ and $k \geq 1 $, by
the maximal coupling (see \cite{lindvall}) and inequality \eqref{Dobr2} one can couple $X(
t_{\ell}+ k \ell^{2N} +i )$ with $ Y( t_{\ell}+ k \ell^{2N} +i ) $
so that
\begin{equation}\label{X=Y}
     \mathbb{P}
( X( t_{\ell}+ k \ell^{2N} +i ) \neq  Y( t_{\ell}+ k \ell^{2N} +i )
) \leq \exp \left (-\hat{c}_N \left \lfloor\frac{\ell^{N+1}}{N-1}
\right \rfloor \right ),
\end{equation}
for $\ell $ large enough.

Let us define the sequence of events $( A_{\ell,i}: \ell \in \N , i
\in [0, \ell^{2N} ) )$ by

\begin{equation}\label{Aelle}
A_{\ell,i}=\left\{ X(t_{\ell}+ a \ell^{2N} +i) =Y(t_{\ell}+ a
\ell^{2N} +i):  a\geq 1\, \text{ and }  t_{\ell}+ k \ell^{2N} +i
\leq t_{\ell+1}-1 \right\},
\end{equation}
for each $\ell \in \N$ and $i\in [0, \ell^{2N} )$.

For $\ell $ large enough and by subadditivity, one has
$$
\mathbb{P} ( A_{\ell,i})  \geq 1 - (\ell +1)^{5N} \exp \left
(-\hat{c}_N \left \lfloor\frac{\ell^{N+1}}{N-1} \right \rfloor
\right ).
$$
We also set $\hat A_\ell =\bigcap_{i =0}^{\ell^{2N}-1 } A_{\ell,i}
$. Then, for $\ell $ large enough,
\begin{equation}\label{Acappuccio2}
    \mathbb{P} ( \{X(t) = Y(t) : t \in  [ t_\ell + \ell^{2N}, t_{\ell+1 }
) \}) = \mathbb{P} ( \hat A_\ell  )
 \geq 1 - (\ell +1)^{7N} \exp \left (-\hat{c}_N \left
\lfloor\frac{\ell^{N+1}}{N-1} \right \rfloor \right ).
\end{equation}
By \eqref{Acappuccio2} and the first Borel-Cantelli lemma, it follows
that $\mathbb{P}(\liminf_{\ell \to \infty} \hat A_\ell) =1$.

Now, for $\varepsilon >0 $ and $s \in \calS$, let us define the
sequence of events $( \hat B_\ell  (\varepsilon ,s): \ell \in \N)$
as
\begin{equation}\label{BBY}
\hat   B_{\ell}     (\varepsilon,s ) = \left \{ \Big | \mu(s) -
\frac{1}{t_{\ell+1} - t_{\ell}}
  \sum_{m= t_\ell}^{t_{\ell+1} -1} \mathbf{1}_{ \{ Y(m) =s \}}\Big  | < \frac{\varepsilon}{2}\right
  \} .
\end{equation}
A straightforward calculation gives that
$$
\liminf_{\ell \to \infty } (\hat B_\ell    (\varepsilon ,s)    \cap
\hat A_\ell  ) \subset \liminf_{\ell \to \infty }  B_\ell
(\varepsilon , s) .
$$
Therefore to end the proof it is enough to show that $ \mathbb{P} (
\liminf_{\ell \to \infty } \hat B_\ell (\varepsilon, s ) ) =1$. Such a result is a consequence of the large deviation bounds for i.i.d.
Bernoulli random variables and the first Borel-Cantelli lemma. This
concludes the proof.
\end{proof}

\begin{rem} \label{remgenerale}
The definition of $(t_\ell: \ell \in \N)$ provided in Theorem \ref{dinamic2}
represents only one of the possible choices. In this respect, it is
interesting to note that the proof of Theorem \ref{dinamic2} can be
adapted to other sequences $(t_\ell: \ell \in \N)$. For example, one
can take $t_{\ell+1}-t_\ell \geq c \ell^{5N-1}$, with $c >0$. In
this case, for any $\ell \in \N$, there exists $I_\ell \in \N$ and an increasing sequence 
$$
t_\ell^{(0)} ,  t_\ell^{(1)} , \ldots , t_\ell^{(I_\ell)}  
$$
such that $t_\ell=t_\ell^{(0)} $,  $t_\ell^{(I_\ell)}=t_{\ell+1}$
and the following property holds 
$$
\lim_{\ell \to \infty} \sup_{i \in \{0,1,\dots, I_\ell-1\}}
\frac{t_\ell^{(i+1)}-t_\ell^{(i)}}{t_\ell^{(i)}} =0;\qquad
\lim_{\ell \to \infty}
\frac{t_\ell^{(0)}-t_{\ell-1}^{(I_\ell-1)}}{t_{\ell-1}^{(I_\ell-1)}}=0.
$$
By reproducing the arguments of the proof of Theorem \ref{dinamic2}  for the new sequences $
t_\ell^{(0)} ,  t_\ell^{(1)} , \ldots , t_\ell^{(I_\ell)}  
$, one obtains
that a new Markov chain defined with this new sequence of times satisfies \eqref{limite1bis}.
\end{rem}
Next example shows that the convergence of the distribution $\mu_k$
to the distribution $\mu$ should not be taken  \emph{too fast} and
$t_{\ell+1}-t_\ell$ should be not taken \emph{too small} in order to have
\eqref{limite1bis}. 
\begin{example}\label{controes}
Let us consider a graph $G=(\calS,E)$ with
$\calS=\{s^1,s^2,s^3,s^4\}$ and
$E=\{\{s^1,s^3\},\{s^3,s^4\},\{s^2,s^4\}\}$.

Let us take the distribution $\mu=(\mu(s):s \in \calS)$ having 
$\mu(s^1)=\mu(s^2)=\frac{1}{2}$, and  define $t_\ell=\ell$, for each $\ell \in \N$, and 
the sequence of distributions 
$(\hat \mu_\ell  : \ell \in \N)$ where 
$\hat \mu_\ell=\mu_{2^\ell} $. 
We take a non-homogeneous Markov chain $X=(X(t):t \in
\N)$ with transition matrix $P(\ell) =(p_{m,n} (\ell) : m,n =1, 2,3,4 )$,  at time $\ell $, given by
$$
P(\ell)=P^{(\hat \mu_\ell, G)}, \qquad \ell \in \N.
$$
In particular, $||\hat \mu_\ell-\mu||_{TV} \leq \frac{1}{2^\ell}$.

A straightforward computation gives that at time $\ell$
\begin{equation}\label{pbis}
p=\frac{1}{2^{\ell+1}},
\end{equation}
accordingly to the definition of $p$ given in \eqref{p}. Thus, \eqref{pbis} gives that
$p_{1,1}(\ell)=1-\frac{1}{2^{\ell+1}}$ at time $\ell$. Therefore,
the Borel-Cantelli's Lemma guarantees that
$$
|\{\ell \in \N : X(\ell)=s^1, X(\ell+1) \not= s^1\}|<\infty \qquad
a.s.,
$$
and therefore
$$
\mathbb{P} (\bigcap_{s \in \calS }\{\lim_{t \to \infty}  \frac{1}{t} \sum_{m=0}^{t-1} \mathbf{1}_{\{ X
(m)=s \}} = \mu(s) \}) =0.
$$
\end{example}


Notice that Example \ref{controes} gives a natural comparison
between our setting and the simulated annealing (see \cite{k1983}). In both cases the
hope is that the rate of convergence is fast but, if one
tries to have an \emph{excessively high} rate of convergence, it leads to local minima (case of simulated
annealing) or not convergence to the distribution $\mu$ (case of our framework). In this case, the response to question 
\textbf{Q} might be wrong, even if the Markov chain is consistent with the graph $G$.

Next result provides an answer to \textbf{Q} for items $(iii)$ and $(iv)$.

\begin{theorem}\label{thdin2}
The following two sentences hold true:
\begin{itemize}
    \item[a.] if $G[\supp{\mu}]$ is connected, then each  homogeneous Markov chain
$X=(X(t) : t \in \N)$ with state space $\supp{ \mu }$ having
transition matrix equal to $P^{(\mu , G[\supp{\mu}])}$ defined in
\eqref{ordinati} 
satisfies \eqref{limite1bis}. Furthermore, $X $ is consistent with
$G$;
    \item[b.] if $G[\supp{\mu}]$ is not connected, then each homogeneous Markov
chain consistent with $G$ does not  satisfies \eqref{limite1bis}.
\end{itemize}
\end{theorem}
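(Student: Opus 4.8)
The plan for part~a.\ is to show that the matrix $P^{(\mu,G[\supp{\mu}])}$ produced by \eqref{ordinati} generates an ergodic chain on $\supp{\mu}$ and then to invoke the classical ergodic theorem. First I would observe that, since $\mu(s)>0$ for every $s\in\supp{\mu}$ and $G[\supp{\mu}]$ is connected, the construction \eqref{ordinati}--\eqref{p} is well posed on the state space $\supp{\mu}$ exactly as it was for $\mu_k$ on $\calS$. The detailed-balance form of \eqref{ordinati} makes $(\mu,P^{(\mu,G[\supp{\mu}])})$ reversible, so $\mu$ is stationary; connectedness of $G[\supp{\mu}]$ gives irreducibility, whence $\mu$ is the unique stationary distribution and the ergodic theorem for the empirical averages of a finite irreducible chain applies (see e.g.\ \cite{bremo}); the self-loops $p_l\geq\frac12$ additionally give aperiodicity. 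The ergodic theorem yields \eqref{limite1bis} for $s\in\supp{\mu}$, while for $s\notin\supp{\mu}$ both sides vanish since the chain never leaves $\supp{\mu}$. Consistency with $G$ is then immediate, because every non-zero transition is a self-loop or runs along an edge of $G[\supp{\mu}]$, which is an edge of $G$ as $G[\supp{\mu}]$ is an induced subgraph.

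For part~b.\ I would argue by contradiction, assuming that some homogeneous chain $X$ consistent with $G$, with transition matrix $P=(p_{i,j})$, satisfies \eqref{limite1bis}. The first step is to deduce that $\mu$ is invariant for $P$. Taking expectations in \eqref{limite1bis} and using bounded convergence gives $\frac1t\sum_{m=0}^{t-1}\mathbb{P}(X(m)=s)\to\mu(s)$ for every $s$; comparing this Ces\`aro limit with the one for the shifted marginals and using $\mathbb{P}(X(m+1)=s)=\sum_i\mathbb{P}(X(m)=i)\,p_{i,s}$, where the finiteness of $\calS$ lets the limit pass through the sum, I obtain $\mu(s)=\sum_i\mu(i)\,p_{i,s}$, that is $\mu P=\mu$.

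The decisive step, which I expect to be the main obstacle, is to turn the disconnectedness of $G[\supp{\mu}]$ into a rigid splitting of the dynamics, even though consistency only forbids \emph{direct} jumps between support states and still allows excursions through states outside $\supp{\mu}$. I would write $\supp{\mu}=A\sqcup B$ with $A,B$ nonempty and carrying no $G$-edge between them, so consistency yields $p_{i,j}=0$ and hence $P(i,A):=\sum_{j\in A}p_{i,j}=0$ for all $i\in B$. Summing the identity $\mu(j)=\sum_i\mu(i)\,p_{i,j}$ over $j\in A$ and dropping the terms with $i\notin\supp{\mu}$ (where $\mu(i)=0$) and with $i\in B$ (where $P(i,A)=0$) gives $\mu(A)=\sum_{i\in A}\mu(i)\,P(i,A)$; since $\sum_{i\in A}\mu(i)=\mu(A)$ and $\mu(i)>0$ on $A$, this forces $P(i,A)=1$ for every $i\in A$, so $A$ is closed, and symmetrically $B$ is closed. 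As $A$ and $B$ are disjoint closed sets, once the chain enters one of them it can never reach the other, so at most one of $A,B$ is visited infinitely often. But $\mu(A),\mu(B)>0$ and \eqref{limite1bis} force both $A$ and $B$ to be visited with positive empirical frequency, hence infinitely often almost surely, a contradiction. This proves part~b.
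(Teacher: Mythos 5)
Your proposal is correct. Part a.\ coincides with the paper's own argument: well-posedness of $P^{(\mu,G[\supp{\mu}])}$ on the state space $\supp{\mu}$, reversibility hence invariance of $\mu$, irreducibility from connectedness, the ergodic theorem, and consistency with $G$ because every positive off-diagonal entry sits on an edge of $G[\supp{\mu}]$, which is an edge of $G$. Part b., however, follows a genuinely different and more complete route. The paper picks $s',s''$ in distinct components of $G[\supp{\mu}]$, waits for the hitting time $T$ of $\{s',s''\}$, and asserts that by consistency with $G$ alone the chain started from $s'$ can never reach $s''$; as you correctly anticipate, consistency only forbids direct jumps between non-adjacent states, and if $G$ itself is connected the chain could a priori travel from $s'$ to $s''$ through $\calS\setminus\supp{\mu}$ without violating \eqref{limite1bis}, since zero asymptotic frequency of the intermediate states does not preclude infinitely many visits to them. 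Your argument closes exactly this gap: you first extract $\mu P=\mu$ from \eqref{limite1bis} by bounded convergence and a Ces\`aro shift (legitimate on a finite state space), and then the mass-balance identity $\mu(A)=\sum_{i\in A}\mu(i)P(i,A)$, obtained after discarding the rows with $\mu(i)=0$ and the rows in $B$, forces $P(i,A)=1$ for all $i\in A$ (and symmetrically for $B$), so each side of the disconnection is a closed class and excursions outside the support are impossible once the chain is in $\supp{\mu}$. The contradiction with both $A$ and $B$ carrying positive limiting frequency, hence being visited infinitely often almost surely, is then airtight. What your route buys is precisely the justification of the step the paper leaves implicit, at the modest cost of the extra verification that $\mu$ is invariant for $P$.
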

\begin{proof} We prove a.. Since $G[\supp{\mu}]$ is connected, then the transition matrix $P^{(\mu ,
G[\supp{\mu}])}$ is well defined. Moreover, $\mu $ is the unique
invariant distribution of $ P^{(\mu , G[\supp{\mu}])} $ because $ P^{(\mu , G[\supp{\mu}])} $  is
irreducible. Now, by applying the ergodic theorem, one has
\eqref{limite1bis}. The consistence of $X$ with $G$ follows from the
fact that, for $l \not= m$, $p_{l,m}
>0 $ implies $\{s^l,s^m\} \in E $.

We  prove b. by contradiction. Assume that \eqref{limite1bis}
holds true for a Markov chain $(X(t) :t \in \N)$. Then for each $s \in \supp {\mu }$ one has
\begin{equation}\label{infinitamente}
    \mathbb{P} ( \{ X(t ) =s, \,\,\, i.o.\} ) =1 .
\end{equation}
Let us consider $ s' , s'' \in \supp{\mu} $ which belong to two
different connected components of $G[\supp{\mu} ] $. By
\eqref{infinitamente}, it  follows that $\mathbb{P} (T < \infty) =1$
where 
$$
T = \inf \{ t\in \N : X (t) \in \{ s', s''\}\}   .
$$
 Without loss
of generality one can assume that $\mathbb{P} ( X (T) = s') >0$.
Then, by the consistence of $X $ with the graph $G$, one has that
$$
\mathbb{P} (  \{t \in \N: X(t ) =s'' \} = \emptyset | X (T) = s' )
=1.
$$
 Therefore
 $$
 \mathbb{P} ( \{ X(t ) =s'', \,\,\, i.o.\} ) <1 ,
 $$
and this contradicts \eqref{infinitamente}.
\end{proof}

\begin{rem}\label{osserva} 
We notice that, by Theorem \ref{thdin2} a., it is possible, for any
$\varepsilon >0 $, to select a homogeneous Markov chain $X=(X(t) : t
\in \N)$ having transition matrix equal to  $P^{(\mu_k , G)}$ (see
\eqref{mukka} and \eqref{ordinati}), with $k \geq \lceil
\frac{1}{\varepsilon}\rceil$,
 satisfying
 \begin{equation}\label{ergo}
\lim_{t \to \infty}  \left | \frac{1}{t} \sum_{m=0}^{t-1}
\mathbf{1}_{\{ X (m)=s \}} - \mu(s)  \right | \leq \varepsilon  ,
\qquad s \in \calS \qquad a.s..
\end{equation}
  Furthermore, $X $ is
consistent with $G$.

Some consequences of Theorems \ref{dinamic2} and \ref{thdin2} arise.
Let us consider a function $f : \calS \to \mathbb{R}$ 

Under condition of Theorem \ref{dinamic2} or of Theorem \ref{thdin2}
a. one obtains
\begin{equation}\label{ergo1}
\lim_{t \to \infty }  \frac{1}{t} \sum_{m=0}^{t-1} f( X (m)) =
\mathbb{E}_\mu (f), \qquad a.s.,
\end{equation}
where $\mathbb{E}_\mu$ is the expected value with respect
to the distribution $\mu$, i.e.
$$
\mathbb{E}_\mu (f)=\sum_{s \in \calS}f(s)\mu(s).
$$
If \eqref{ergo} holds true, then
\begin{equation}\label{ergo2}
    \lim_{t \to \infty } \left | \frac{1}{t} \sum_{m=0}^{t-1} f( X (m))
- \mathbb{E}_\mu (f) \right | \leq  \varepsilon     \max_{s \in \calS} |f(s)|       , \qquad a.s..
\end{equation}
\end{rem}

We now need  the definition of product of graphs. The usefulness of such a definition will be clear in the next section on the repeated games. Thus, in the light of the subsequent definitions and results, we use the same notation employed in the formalization of the games. 
\begin{definition}     \label{decomponibile}
Given a finite set $V$, a 
partition $\calC =\{
C_1, \ldots , C_r\}$ of $V$ and a connected finite 
graph $G= (\calS , E) $ where $\calS = \prod_{j \in V} \calS_j $, we say that $G$ is
$\calC$-\emph{decomposable} if $G=G_1 \otimes G_2 \otimes \ldots
\otimes G_r$, where $G_1= (\calS_{C_1},E_1), \dots, G_r=
(\calS_{C_r},E_r)$ and $\otimes$ is the strong
product for graphs introduced by \cite{sabidussi}, i.e.  given $(s_{C_1} , \ldots , s_{C_r} )  ,  (\bar s_{C_1} , \ldots , \bar s_{C_r} )  
\in \calS$ they are adjacent with respect to $G$ if and only if for any $\ell = 1, \ldots , r$ the vertices $s_{C_\ell }, \bar s_{C_\ell } \in \calS_{C_\ell}$ are   adjacent with respect to $G_\ell$.
 We say that $\mathcal{G}=(G_1, \dots, G_r)$ is the
$\calC$-\emph{decomposition} of $G$.
\end{definition}
We point out that, for a given coalition structure  $\calC$, if the graph $G $ has a $\calC$-decomposition 
 $\mathcal{G}'=(G'_1, \dots, G'_r)$ then it is unique.

We also notice that a complete graph is trivially
$\calC$-decomposable, for each partition $\calC =\{C_1, \ldots , C_r\}$ of $V$. In this
case, the graphs $G_1, \ldots, G_r$ of $\mathcal{G}$ are complete.

We consider $\calC=\{C_1, \dots, C_r\}$ a partition of $V$, a
$\calC$-decomposable graph $G=(\calS, E)$ with $\calC$-decomposition
given by $\calG=(G_1, \dots, G_r)$, where $G_h= (\calS_{C_h},E_h)$,
for each $h=1, \dots, r$. Let us take a product distribution $\mu=\prod_{h=1}^r \mu_{C_h}$, where
$\mu_{C_h}$ is a distribution on the space $\calS_{C_h}$.

In order to proceed, we construct $r$ independent Markov chains
$X_{C_1}=(X_{C_1}(t):t \in \N), \dots, X_{C_r}=(X_{C_r}(t):t \in
\N)$ such that the $h$-th Markov chain $X_{C_h}$ has state space
$\calS_{C_h}$ and an arbitrary initial distribution
$\lambda_{C_h}=(\lambda_{C_h}(s_{C_h}): s_{C_h} \in \calS_{C_h})$, for each $h=1, \ldots,r$.

Moreover, by replacing $\calS$ with $\calS_{C_h}$ and
$\mu$ with $\mu_{C_h}$, we replicate the construction  provided
before Theorem \ref{dinamic2}. In so doing, we take $k \in \N$
to define the distribution $\mu_{C_h,k}=(\mu_{C_h,k}(s_{C_h}):
s_{C_h} \in \calS_{C_h})$.

Now, take a sequence of increasing times $(t_\ell^{(C_h)}:
\ell \in \N)$, such that 
\begin{equation}\label{minimo}
\min_{h = 1, \ldots ,
r}t^{(C_h)}_{\ell+1}-t^{(C_h)}_\ell \geq c \ell^{5N-1}, 
\end{equation}
with $c $ a positive constant.

 The transition matrices of $X_{C_h}$ are $(P_{C_h}(t):t \in \N)$
as in \eqref{Penne}:
\begin{equation}\label{Penne2}
    P_{C_h}(t) = \sum_{k =1}^{\infty} P^{(\mu_{C_h,k},G_h)} \mathbf{1}_{\{ t \in [t_k^{(C_h)}
, t_{k+1}^{(C_h)} ) \}}.
\end{equation}

We introduce the Markov chain $X=(X(t) =(X_{C_1}(t), \dots, X_{C_r} (t))\in \calS : t\in \N ) $.

Next result is similar to Theorem \ref{dinamic2} but it
is based on the $r$ independent Markov chains constructed above. In the context of MCMC, this framework provides a
remarkable simplification of the computational complexity, 
in that dealing with $r$ independent
Markov chains with state spaces $\calS_{C_1}, \ldots, \calS_{C_r}$ is more affordable than
only one Markov chain with state space given by
$\calS=\calS_{C_1}\times \ldots \times \calS_{C_r}$. Furthermore, as
we will see and as preannounced above, such a context will be of theoretical usefulness in
defining the repeated $G$-games.

\begin{theorem}\label{dinamic3}
Let us consider a finite set $V $ and a  partition $\calC =\{ C_1, \ldots , C_r\}$ of $V$.    Let $\calS = \prod_{\ell =1}^r  \calS_{C_\ell}$ and  a
$\calC$-decomposable connected graph $G=(\calS,E) $, with
$\calC$-decomposition $\calG=(G_1, \ldots, G_r)$.   

Let us take a 
product distribution $\mu=\prod_{h=1}^r \mu_{C_h}$ and consider the
$r$ independent Markov chains $X_{C_1}=(X_{C_1}(t):t \in \N), \dots,
X_{C_r}=(X_{C_r}(t):t \in \N)$ constructed above, and the Markov
chain $X=    ( (X_{C_1} (t), \dots, X_{C_r}  (t) : t \in \N )$.

Then
\begin{equation}\label{XX11}
    \lim_{t \to \infty}\frac{1}{t}\sum_{m=0}^{t-1}
    \mathbf{1}_{\{X(m)=s\}}=\lim_{t \to \infty}\frac{1}{t}\sum_{m=0}^{t-1} \prod_{h=1}^r
    \mathbf{1}_{\{X_{C_h}(m)=s_{C_h}\}}=\prod_{h=1}^r
    \mu_{C_h}(s_{C_h})=\mu(s),
\end{equation}
for each $s=(s_{C_1}, \dots, s_{C_r}) \in \calS$.
\end{theorem}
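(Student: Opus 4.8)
The plan is to reduce the joint empirical behaviour of the product chain $X$ to that of $r$ independent, \emph{temporally} independent auxiliary processes, exactly as in the proof of Theorem \ref{dinamic2}, and then to exploit independence across coalitions to factor the limit. The first equality in \eqref{XX11} is merely the set identity $\{X(m)=s\}=\bigcap_{h=1}^r\{X_{C_h}(m)=s_{C_h}\}$, so the substance of the statement is the convergence of $\frac1t\sum_{m=0}^{t-1}\prod_{h=1}^r\mathbf{1}_{\{X_{C_h}(m)=s_{C_h}\}}$ to $\prod_{h=1}^r\mu_{C_h}(s_{C_h})$.

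First, for each coalition $h$ I would invoke the coupling machinery underlying Theorem \ref{dinamic2}. Since the time sequence $(t_\ell^{(C_h)}:\ell\in\N)$ obeys \eqref{minimo}, Remark \ref{remgenerale} applies to the chain $X_{C_h}$ with transition matrices \eqref{Penne2}, so I can build auxiliary variables $(Y_{C_h}(t):t\in\N)$, independent in $t$, with $Y_{C_h}(i)\sim\mu_{C_h,k}$ when $i\in[t_k^{(C_h)},t_{k+1}^{(C_h)})$, coupled to $X_{C_h}$ so that the Dobrushin estimate of Lemma \ref{lem:D} together with the Borel--Cantelli bounds of Theorem \ref{dinamic2} give, almost surely, $\frac1t\,|\{m<t:X_{C_h}(m)\neq Y_{C_h}(m)\}|\to 0$ (the disagreements being confined to the asymptotically negligible burn-in blocks plus finitely many coupling failures). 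Because the $r$ chains are independent, I perform these $r$ couplings independently, obtaining mutually independent families $\{Y_{C_h}(t)\}_{h,t}$. Taking the finite union over $h$, the set of times where $X(m)\neq Y(m):=(Y_{C_1}(m),\dots,Y_{C_r}(m))$ still has vanishing density, so $\frac1t\sum_{m<t}\mathbf{1}_{\{X(m)=s\}}$ and $\frac1t\sum_{m<t}\mathbf{1}_{\{Y(m)=s\}}$ admit the same limit whenever either exists.

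It then remains to analyse the auxiliary product process. Here $Z_m:=\mathbf{1}_{\{Y(m)=s\}}=\prod_{h=1}^r\mathbf{1}_{\{Y_{C_h}(m)=s_{C_h}\}}$ are independent across $m$, all the $Y_{C_h}(m)$ being mutually independent, and are Bernoulli with mean $p_m=\prod_{h=1}^r\mu_{C_h,k_h(m)}(s_{C_h})$, where $k_h(m)$ is the block index of chain $h$ containing $m$. Since the block endpoints diverge, $k_h(m)\to\infty$ for every $h$ as $m\to\infty$, and the bound $\|\mu_{C_h,k}-\mu_{C_h}\|_{TV}\le 1/k$ coming from \eqref{mukka} forces $p_m\to\prod_{h=1}^r\mu_{C_h}(s_{C_h})=\mu(s)$. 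A strong law for bounded independent (non-identically distributed) summands --- either Kolmogorov's criterion $\sum_m\mathrm{Var}(Z_m)/m^2<\infty$, or the large-deviation-plus-Borel--Cantelli argument already used for the Bernoulli variables at the end of the proof of Theorem \ref{dinamic2} --- yields $\frac1t\sum_{m<t}(Z_m-p_m)\to 0$ almost surely; combined with the Ces\`aro convergence $\frac1t\sum_{m<t}p_m\to\mu(s)$ this gives $\frac1t\sum_{m<t}Z_m\to\mu(s)$ almost surely, and the proof is complete.

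The main obstacle I anticipate is bookkeeping rather than conceptual: the $r$ block sequences $(t_\ell^{(C_h)})$ are not aligned, so the per-time law of the auxiliary vector $Y(m)$ is an $h$-dependent mixture of the $\mu_{C_h,k}$ and is genuinely non-stationary. The role of condition \eqref{minimo} is precisely to guarantee, uniformly in $h$, blocks long enough for the Dobrushin contraction and the coupling bounds of Theorem \ref{dinamic2} to apply, so that the misalignment does not spoil the vanishing density of the disagreement set; granting that, the mutual independence across coalitions does the rest.
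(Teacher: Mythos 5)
Your proposal is correct and follows essentially the same route as the paper: coupling each chain $X_{C_h}$ to temporally independent auxiliary variables $Y_{C_h}$ via the Dobrushin/Lemma \ref{lem:D} estimates, showing via \eqref{minimo} that the disagreement times have vanishing density, using independence across coalitions to pass to the product process, and finishing with a law of large numbers for the independent non-identically distributed Bernoulli indicators whose means converge to $\mu(s)$. The only cosmetic difference is that you spell out the final strong-law step (Kolmogorov's criterion as an alternative) where the paper simply defers to "the arguments in the proof of Theorem \ref{dinamic2}".
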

\begin{proof}
By \eqref{minimo}  follows that
$$
\lim_{t \to \infty} \frac{   \left   | [0,t] \cap \left (
\bigcup_{h=1}^r \bigcup_{\ell=1}^\infty [   t^{(C_h)}_\ell ,
t^{(C_h)}_\ell + \ell^{2N}) \right )   \right       |}{t}   =0.
$$
In fact, for each $h = 1, \ldots , r$,
$$
\lim_{t \to \infty} \frac{\left | [0,t] \cap \left (
\bigcup_{\ell=1}^\infty [   t^{(C_h)}_\ell , t^{(C_h)}_\ell +
\ell^{2N}) \right ) \right |}{t}   =0,
$$
since
$$
\lim_{\ell \to \infty}\frac{\ell^{2N}}{t^{(C_h)}_{\ell + 1}
-t^{(C_h)}_{\ell } }\leq \lim_{\ell \to \infty}\frac{\ell^{2N}}{c
\ell^{5N -1} } =0.
$$
Thus, the times in $\bigcup_{h=1}^r \bigcup_{\ell=1}^\infty [
t^{(C_h)}_\ell , t^{(C_h)}_\ell + \ell^{2N}) $ can be neglected in
the procedure of checking \eqref{XX11}, i.e.
$$
    \lim_{t \to \infty}\frac{1}{t}\sum_{m=0}^{t-1}
    \mathbf{1}_{\{X(m)=s\}}=
    \lim_{t \to \infty}\frac{1}{t}\sum_{m=0}^{t-1}
    \mathbf{1}_{\{X(m)=s\}}\cdot \mathbf{1}_{\{m \notin
\bigcup_{h=1}^r \bigcup_{\ell=1}^\infty [   t^{(C_h)}_\ell ,
t^{(C_h)}_\ell + \ell^{2N})  \}}
$$
and also
$$
    \lim_{t \to \infty}\frac{1}{t}\sum_{m=0}^{t-1}
    \mathbf{1}_{\{X(m)=s\}}=
    \lim_{t \to \infty}\frac{1}{t}\sum_{m=0}^{t-1}\left[
    \mathbf{1}_{\{X(m)=s\}}+ \mathbf{1}_{\{m \in
\bigcup_{h=1}^r \bigcup_{\ell=1}^\infty [   t^{(C_h)}_\ell ,
t^{(C_h)}_\ell + \ell^{2N})  \}}\right].
$$
Let us define the set of times $A =\bigcup_{h =1}^r
\bigcup_{\ell=1}^\infty   [ t^{(C_h)}_\ell , t^{(C_h)}_\ell +
\ell^{2N})$. 
Now we introduce the independent  random variables  
$( Y_{C_h}  ( t  )  :  t \in \N , h= 1, \ldots , r)$.  The random variables $( Y_{C_h}  ( t  )  :  t \in \N )$,  with  label $h $, take 
value on $S_{C_h}$.  Moreover,  if 
$    t \in[t^{(C_h)}_k , t^{(C_h)}_{k+1})$ then
   $Y_{C_h}  ( t  )  $  has distribution $\mu_{C_h,k}$.


We now adapt formula \eqref{X=Y} to the Markov chain $X_{C_h}$.
If $\bar{t} \notin A$ then for each $h = 1, \ldots , r $ there exists  $ \bar{\ell} _h $ such that $\bar{t} $ belong to $  [ t^{(C_h)}_{\bar{\ell}_h} , t^{(C_h)}_{\bar{\ell}_h+1}  )$. 
In this case formula \eqref{X=Y} becomes 
\begin{equation}\label{Acapp22}
    \mathbb{P} ( X_{C_h}(\bar t) = Y_{C_h}(\bar t) )
 \geq 1 -  \exp \left (-\hat{c}_N \left
\lfloor\frac{{\bar \ell_h}^{N+1}}{N-1} \right \rfloor \right ),
\end{equation}
where we recall that $\hat{c}_N=\frac{1}{[2(N-1)^2]^{N-1}}$.

Hence, for any $\bar t \notin A$ one has that there
exist $\bar \ell_1,\ldots, \bar \ell_r \in \N$ such that $\bar t \in
\bigcap_{h=1}^r[ t^{(C_h)}_{\bar\ell_h} + {\bar \ell_h}^{2N} ,
t^{(C_h)}_{\bar \ell_h +1} ) $. Therefore, using the independence of the random variables $Y$'s and the independence 
of the Markov chains $X$'s, one has  
\begin{equation}\label{Acapp23}
    \mathbb{P} ( (X_{C_1}(\bar t), \ldots,
    X_{C_r}(\bar t))= (Y_{C_1}(\bar t), \ldots,  Y_{C_r}(\bar t)) )
 \geq  1 - \sum_{h=1}^r \exp \left (-\hat{c}_N \left
\lfloor\frac{{\bar \ell_h}^{N+1}}{N-1} \right \rfloor \right ).
\end{equation}
For $\bar t \in
\bigcap_{h=1}^r[ t^{(C_h)}_{\bar\ell_h} + {\bar \ell_h}^{2N} ,
t^{(C_h)}_{\bar \ell_h +1} ) 
$, the distribution
of $(Y_{C_1}(\bar t), \ldots, Y_{C_r}(\bar t))$ coincides 
with $\prod_{h=1}^r\mu_{C_h,\bar {\ell}_h}$.

Thus, we have
\begin{equation}\label{TV2}
\left|\left|\mu-\prod_{h=1}^r\mu_{C_h,\bar \ell _h}\right|\right|_{TV}\leq
\sum_{h=1}^r\frac{1}{\bar \ell_h}.
\end{equation}
Notice that any $\bar \ell _h $ increases to infinity when $\bar t$ goes to infinity. Therefore, the left-hand side of \eqref{TV2} goes to zero
as $\bar t$ goes to infinity.
Inequalities \eqref{Acapp23} and \eqref{TV2} give 
an upper bound for the distance in total variation between the law of $X (\bar t )$ and the distribution $\mu$.

Now, by following the arguments in the proof of Theorem \ref{dinamic2}, 
we obtain equation \eqref{XX11}.
\end{proof}

 In the same setting of the previous theorem,
 consider a graph $G = (\calS, E ) $ and a 
 connected graph $G' =(\calS, E')$ such that 
 $E' \subset E$ and $G' $ is
$\calC $-decomposable. Theorem \ref{dinamic3}  can 
be applied to the graph $G'$ and the Markov chain $X$.  
In any case $X $ 
is also consistent with the graph $G $, which is connected by construction having more edges than $G'$. 
Thus, in some sense,  Theorem \ref{dinamic3} can be applied also to the supergraph $G$ of $G'$.

\subsection{Repeated $G$-games}

We now give the general definition of a repeated game in presence of a
coalition structure $\calC=\{C_1, \ldots,C_r\}$, 
and then we present our specific setting.

\begin{definition}\label{ripetuti}
A \emph{repeated game} is a game played $T$ times, with $T \in \N \cup
\{\infty \}$, by a set of players $V$ with a
coalition structure $\calC=\{C_1, \ldots,C_r\}$, where 
the coalition $C
\in \calC$ has set of strategies $\calS_{C}$. 
Each coalition $C\in \calC$ has a initial strategy $s_C(0)$ at time $t=0$. 
Furthermore, coalition $C$ selects at time $1\leq t < T$ a strategy
$s_{C}(t) \in \calS_C$, where such a selection can depend only on the available information on the
previous history of the game. The payoff function of $C \in \calC$
at any time is
given by $\Pi_{C}:\calS \to \R$. 
The \emph{payoff}
of the $T$ times repeated game for the coalition $C \in \calC$ is
$$
\Pi_C^{(T)}=\left\{%
\begin{array}{ll}
    \frac{1}{T}\sum_{m=0}^{T-1}\Pi_C(s_{C_1}(m), \ldots ,s_{C_r}(m) ), & \hbox{if $T<\infty$;} \\
\mbox{} \\
   \liminf_{t \to \infty}\frac{1}{t}\sum_{m=0}^{t-1}\Pi_C (    s_{C_1}(m), \ldots ,s_{C_r}(m)    ),  & \hbox{if $T=\infty$.} \\
\end{array}
\right.
$$
\end{definition}

\medskip  

In the proposed context of $G$-games, each coalition $C$ can select 
at time $t+1$ only a strategy (or action)
 $s_C(t+1)$ that is adjacent to the
strategy $s_C(t)$ selected at time $t$. Such a
requirement cannot be satisfied for a general graph, because it
would imply the construction of a 
strategy $s_C(t+1)$ by knowing the decisions that are doing the other coalitions.  
 To convince the reader of this problem, we present a simple
example in the setting of two players.
\begin{example}
\label{ex:rep} Consider $V=\{1,2\}$, $\calS=\calS_1 \times \calS_2$,
with $ \calS_1 \equiv \calS_2 \equiv \{s_1,s_2\}$. The graph is
$G=(\calS, E)$, where $$E=\{((s_1,s_1),(s_2,s_1)),
\,((s_1,s_1),(s_1,s_2)),\, ((s_1,s_2),(s_2,s_2)),\,
((s_2,s_1),(s_2,s_2))\}.$$
Notice that $(s_1,s_1)$ and $(s_2,s_2)$ are not
adjacent, and therefore it is impossible to have that $s(t)=(s_1,s_1)$ and $s(t+1)=(s_2,s_2)$. In any case, both players have the opportunity to move from $s_1$
 to $s_2$ if the other player decides to remain in $s_1$.

Thus, the selection of the strategy by a player 
should not depend only
on the past, but also on the current choices  of the other player. However, this situation is not considered in Definition \ref{ripetuti}. 
\end{example}

For the reasons expressed above and explained through Example
\ref{ex:rep} we will define the repeated $G$-games when $G$  can be written as product 
of graphs according to Definition~\ref{decomponibile}.

We are ready to present the definition of repeated $G$-games.

\begin{definition}
\label{def:repeated} Consider a connected graph $G=(\calS,E)$, a
set of players $V$ and a coalition structure  $\calC
=\{ C_1, \ldots , C_r\}$. Assume that $\calG=(G_1, \ldots, G_r)$ is
the $\calC$-decomposition of $G$. A \emph{repeated $G$-game} 
is a repeated game
such that, for $t \in \N$, any coalition $C \in \calC$  can select at time $t+1$ only strategies $s_{C}(t+1)$'s in
$\calS_C$ which are adjacent to the strategy
$s_{C}(t) \in \calS_C$ selected at time $t$.
\end{definition}
\medskip 

Let us focus on the information. We will present two extreme cases. 
\begin{itemize}
\item[$(I_M)$] All the coalitions are aware about the previous history of the repeated game. This is the maximum available level of information, called \emph{maximal information}.  
\item[$(I_m)$] Any coalition has knowledge of time $t$ and of its previously selected strategies before $t$, without any information on the choices of the others coalitions. 
We call this case \emph{minimal information}.  
\end{itemize}

We also assume that the vector of the initial strategies of the coalitions at time zero are of two types.
\begin{itemize}
\item[$(P_0)$] The initial strategies are decided by the players themselves.
\item[$(R_0)$] There is a referee of the game who assigns the initial strategies.
\end{itemize} 
In both cases when the information is minimal the coalitions are not aware 
about the initial strategies of the others.  

We notice that it is often not possible or really hard to load the entire past history of the game into memory. Therefore, we will  
focus mainly on Markovian processes, where the only datum needed is the current  time $t $ and the  strategy selected at time $t-1$.

%
%

%


\medskip

We now present an immediate 
result showing the relevance of the pure $\calC$-equilibria, given  in Definition \ref{C-equilibrium},  for $T=2$. 

\begin{theorem}
\label{guess}
Consider a repeated $G$-game with coalition 
structure $\calC=\{ C_1, \ldots  , C_r \}$ where $T=2$, information is of 
type $(I_M)$ and initial strategies are of type $(R_0)$. If 
the vector of initial strategies $\bar s =(\bar s_{C_1} , \ldots ,\bar s_{C_r} ) \in \calE_\calC$ , then 
it is a 
Berge equilibrium for $\calC$ at time $t=1$ where the space of strategies available to any coalition 
$C \in \calC$ is restricted to the strategies adjacent to $\bar s_{C} $. 
\end{theorem}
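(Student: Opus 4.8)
The plan is to unwind the definition of a repeated $G$-game for $T=2$ and to reduce the claim to a direct application of the $\calC$-equilibrium inequalities \eqref{Ceq}, exploiting the strong-product structure of the $\calC$-decomposition. First I would set up the second stage as a genuine one-shot game among the coalitions. Since $T=2$ and the referee fixes $s_C(0)=\bar s_C$ for every $C$ under $(R_0)$, the time-$0$ term $\Pi_C(\bar s)$ in the total payoff $\Pi_C^{(2)}=\tfrac12\bigl(\Pi_C(s(0))+\Pi_C(s(1))\bigr)$ is a constant that no coalition can influence. Hence each coalition, when maximizing $\Pi_C^{(2)}$, is effectively maximizing only the time-$1$ term $\Pi_C(s(1))$. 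Under $(I_M)$ every coalition knows $\bar s$, so the time-$1$ move is a simultaneous one-shot game whose strategy sets are constrained by Definition \ref{def:repeated}: coalition $C$ may select $s_C(1)\in\calS_C$ only among the strategies adjacent to $\bar s_C$ in the component graph $G_C$ of the $\calC$-decomposition $\calG=(G_1,\ldots,G_r)$. Note that $\bar s_C$ itself lies in this restricted set by the reflexive part of the adjacency convention, so $\bar s$ is a feasible profile of the restricted game.

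Next I would translate ``Berge equilibrium of the restricted second-stage game'' into concrete inequalities. Since a Berge equilibrium is the Nash equilibrium obtained by regarding each coalition as a single player (equivalently, the complete-graph $\calC$-equilibrium applied to the restricted strategy sets), it suffices to show that for every $C\in\calC$ and every $t_C\in\calS_C$ adjacent to $\bar s_C$ in $G_C$ one has
\[
\Pi_C(\bar s)\ \geq\ \Pi_C([\bar s,t;C]),
\]
where $t$ is any profile whose $C$-coordinates equal $t_C$; the remaining coordinates of $t$ are irrelevant, as $[\bar s,t;C]$ keeps all coordinates outside $C$ equal to those of $\bar s$.

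The key step is to manufacture, from such a single-coalition deviation $t_C$, a global profile adjacent to $\bar s$ in $G$. I would define $s\in\calS$ by $s_C=t_C$ and $s_{C'}=\bar s_{C'}$ for all $C'\neq C$. By Definition \ref{decomponibile}, $s$ and $\bar s$ are adjacent in $G$ precisely when they are adjacent in every component $G_\ell$: in the $C$-component this holds because $t_C$ is adjacent to $\bar s_C$ by hypothesis, and in every other component it holds trivially since the coordinates coincide. If $t_C\neq\bar s_C$ then $s\neq\bar s$, so $\{s,\bar s\}\in E$ and the assumption $\bar s\in\calE_\calC$ lets me invoke \eqref{Ceq} for this $s$, giving $\Pi_C(\bar s)\geq\Pi_C([\bar s,s;C])=\Pi_C([\bar s,t;C])$; if $t_C=\bar s_C$ the inequality reduces to an equality and holds trivially. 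This is exactly the required second-stage Berge inequality, so $\bar s$ is a Berge equilibrium of the restricted time-$1$ game.

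I expect the only delicate point to be the adjacency bookkeeping: one must use the strong-product definition to pass from a single coalition's local move (an adjacency in $G_C$) to a true edge of $G$, freezing all other coordinates at $\bar s$ so that the remaining components stay adjacent via the reflexive ($s=t$) part of the convention. The $\calC$-decomposability of $G$ is precisely what makes this construction legitimate; once the profile $s$ is in hand, the conclusion is an immediate substitution into \eqref{Ceq}, and the roles of $(R_0)$ and $(I_M)$ are only to guarantee that the time-$0$ payoff is a fixed constant and that each coalition indeed reasons against the others staying at $\bar s$.
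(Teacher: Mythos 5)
Your argument is correct and is precisely the unwinding the authors intend: the paper states Theorem \ref{guess} as an ``immediate result'' and gives no proof at all, so there is nothing to diverge from. Your reduction --- observing that under $(R_0)$ and $(I_M)$ the time-$0$ term of $\Pi_C^{(2)}$ is a fixed constant, lifting a local deviation $t_C$ adjacent to $\bar s_C$ in $G_C$ to a profile adjacent to $\bar s$ in $G$ via the strong-product and reflexive-adjacency conventions, and then invoking \eqref{Ceq} --- supplies exactly the omitted details, with no gaps.
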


 We also believe that $\calC$-equilibria, defined in Section \ref{Sec2},  
 become relevant  when the coalitions are not aware 
 about the terminal time of the game. This is the case 
 of a final stage of the game not under the control of the 
 single coalitions. For instance, each 
 coalition has the opportunity to abandon the game and 
 such an abandonment would determine the end of the 
 game itself. In a different context, the game might 
 end at the occurrence of an event whose distribution 
 is not known. In all such situations one can reasonably 
 guess that coalitions do not consider the consequences 
 on the long-term of their strategies and 
they play the game by implicitly assuming that each round is the last one. 
 Thus, it is reasonable to believe that if the coalitions achieve at time $\bar t$ a $\calC$-equilibrium, then they will play such strategies at each time $t >\bar t$. Indeed, such a way to play leads to a stage-wise maximization of their payoffs.

\medskip

We are ready to give the definition of  equilibrium for the repeated $G$-games. According to the specific framework we will focus on, we restrict our attention to the case of minimal information and $T = \infty$.

\begin{definition}
\label{def:CGR}
Consider a connected graph $G=(\calS,E) $ and a repeated $G$-game with coalition structure $\calC=\{C_1, \dots, C_r\}$.
Assume that $\calG=(G_1, \ldots, G_r)$ is
the $\calC$-decomposition of $G$. Assume $T=\infty$, minimal information and initial strategies of type $(P_0)$ or $(R_0)$.  Consider $r$ adapted and independent stochastic processes $X_{C_1}=(X_{C_1}(t):t \in \N), \dots,
X_{C_r}=(X_{C_r}(t):t \in \N)$ taking values in $\calS_{C_1}, \dots,\calS_{C_r}$ and consistent with $G_1, \ldots, G_r$, respectively.

We say that $X=(X_{C_1},\ldots, X_{C_r})$ is a $\calC$-equilibrium for the repeated $G$-game when
$$
\mathbb{E}(\liminf_{t \to \infty}\frac{1}{t} \sum_{j=0}^{t-1}
\Pi_{C_\ell}(X_{C_1}(m),\ldots,X_{C_{r}}(m))) \geq
$$
\begin{equation}\label{eqGgame}  \geq \mathbb{E}(\liminf_{t \to \infty}\frac{1}{t} \sum_{j=0}^{t-1}
\Pi_{C_\ell}(X_{C_1}(m),\ldots,X_{C_{\ell-1}}(m),\tilde{X}_{C_{\ell}}(m),X_{C_{\ell+1}}(m), \ldots,X_{C_{r}}(m))),
\end{equation}
for each $\ell=1, \dots, r$ and for any adapted stochastic process $\tilde{X}_{C_{\ell}}$ which is consistent with $G_\ell$ and independent from $ X_{C_1},\ldots,X_{C_{\ell-1}}, X_{C_{\ell+1}}, \ldots,X_{C_{r}} $.
\end{definition} 

The previous definition is meaningful only in the case of minimal information because we are requiring 
the independence of the strategy processes.
We now illustrate the connection between the equilibria in $\calM_\calC$ and the $\calC$-equilibria for the repeated $G$-games defined in Definition \ref{def:CGR}. 
Specifically, we will present a version of the Folk Theorem for our framework.
To proceed, we need a preliminary general result. We state it directly in the language of the $G$-games, for the sake of notation. 
\begin{theorem}
\label{thm:calM}
Consider a connected graph $G=(\calS,E) $ and a repeated $G$-game with coalition structure $\calC=\{C_1, \dots, C_r\}$. Assume $T=\infty$ and that $\calG=(G_1, \ldots, G_r)$ is
the $\calC$-decomposition of $G$.
Consider $\Lambda_{C_1} \times \ldots \times \Lambda_{C_r} \in \calM_\calC$ and $r$ independent Markov chains $X_{C_1}=(X_{C_1}(t):t \in \N), \dots,
X_{C_r}=(X_{C_r}(t):t \in \N)$ with state space $\calS_{C_1}, \dots,\calS_{C_r}$ and consistent with $G_1, \ldots, G_r$, respectively, such that
\begin{equation}\label{calM-1}
 \lim_{t \to \infty}\frac{1}{t}\sum_{m=0}^{t-1} \prod_{h=1}^r
    \mathbf{1}_{\{X_{C_h}(m)=s_{C_h}\}}=\prod_{h=1}^r
    \Lambda_{C_h}(s_{C_h}),  \qquad a.s.,
\end{equation}
for each  $s_{C_h} \in \calS_{C_h}$.

Then, almost surely,
$$
\Pi_{{C_\ell}}^{(\infty)} = \mathbb{E}_{\Lambda_{C_1}  \times \ldots  \Lambda_{C_r}   }  (\Pi_{C_\ell})        =\lim_{t \to \infty} \frac{1}{t} \sum_{m=0}^{t-1}\Pi_{C_\ell}(X_{C_1}(m),\ldots,X_{C_r}(m)) \geq 
$$
\begin{equation}\label{eqult1} \geq    \mathbb{E}(\liminf_{t \to \infty}\frac{1}{t} \sum_{j=0}^{t-1}
\Pi_{C_\ell}(X_{C_1}(m),\ldots,X_{C_{\ell-1}}(m),\tilde{X}_{C_{\ell}}(m),X_{C_{\ell+1}}(m), \ldots,X_{C_{r}}(m))).
\end{equation}
for each $\ell=1, \dots, r$ and stochastic process $\tilde{X}_{C_{\ell}}$ consistent with $G_\ell$ and independent from  the  Markov chains $ X_{C_1},\ldots,X_{C_{\ell-1}}, X_{C_{\ell+1}}, \ldots,X_{C_{r}} $. 
\end{theorem}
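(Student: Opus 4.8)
The plan is to split the claim into the chain of equalities on the first line and the equilibrium inequality \eqref{eqult1}, handling them with different tools.

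\emph{The equalities.} Since $\calS$ is finite, $s\mapsto\Pi_{C_\ell}(s)$ is bounded and I would rewrite the time average as a finite combination of empirical frequencies,
\[
\frac{1}{t}\sum_{m=0}^{t-1}\Pi_{C_\ell}(X_{C_1}(m),\ldots,X_{C_r}(m))=\sum_{s\in\calS}\Pi_{C_\ell}(s)\,\frac{1}{t}\sum_{m=0}^{t-1}\prod_{h=1}^{r}\mathbf{1}_{\{X_{C_h}(m)=s_{C_h}\}}.
\]
By \eqref{calM-1} each inner average converges a.s.\ to $\prod_{h}\Lambda_{C_h}(s_{C_h})$, so the right-hand side converges a.s.\ to $\mathbb{E}_{\Lambda_{C_1}\times\ldots\times\Lambda_{C_r}}(\Pi_{C_\ell})$; this is exactly the computation \eqref{ergo1} applied to $f=\Pi_{C_\ell}$. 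In particular the $\liminf$ defining $\Pi_{C_\ell}^{(\infty)}$ is a genuine limit, which yields the first two equalities.

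\emph{Reduction to pure deviations.} Next I would extract the consequence of $\Lambda_{C_1}\times\ldots\times\Lambda_{C_r}\in\calM_\calC$ that I actually use. Specializing \eqref{mixdefstr} to the Dirac mass $\Lambda_{C_\ell}=\delta_a$, $a\in\calS_{C_\ell}$, its right-hand side becomes the expected payoff of $C_\ell$ when $C_\ell$ freezes its strategy at $a$ while the remaining coalitions play $\prod_{h\neq\ell}\Lambda_{C_h}$; denote it $\phi_\ell(a)$. Thus $\phi_\ell(a)\le\mathbb{E}_{\Lambda_{C_1}\times\ldots\times\Lambda_{C_r}}(\Pi_{C_\ell})$ for every $a$, i.e. $\max_{a\in\calS_{C_\ell}}\phi_\ell(a)\le\mathbb{E}_{\Lambda_{C_1}\times\ldots\times\Lambda_{C_r}}(\Pi_{C_\ell})$.

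\emph{Bounding the deviation.} Write $\tilde A_t=\frac{1}{t}\sum_{m=0}^{t-1}\Pi_{C_\ell}(X_{C_1}(m),\ldots,\tilde X_{C_\ell}(m),\ldots,X_{C_r}(m))$ and let $X_{-\ell}(m)$ stand for the vector of the non-deviating coordinates. I would decompose
\[
\tilde A_t=\frac{1}{t}\sum_{m=0}^{t-1}\phi_\ell(\tilde X_{C_\ell}(m))+\frac{1}{t}\sum_{m=0}^{t-1}\Big(\Pi_{C_\ell}(\tilde X_{C_\ell}(m),X_{-\ell}(m))-\phi_\ell(\tilde X_{C_\ell}(m))\Big),
\]
the first sum being $\le\max_a\phi_\ell(a)\le\mathbb{E}_{\Lambda_{C_1}\times\ldots\times\Lambda_{C_r}}(\Pi_{C_\ell})$ for every $t$ and every realization. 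Since $\calS$ is finite, $\tilde A_t$ is bounded, so Fatou's lemma gives $\mathbb{E}(\liminf_{t}\tilde A_t)\le\liminf_{t}\mathbb{E}(\tilde A_t)$, and it remains to bound $\mathbb{E}(\tilde A_t)$. Because $\tilde X_{C_\ell}$ is independent of $X_{-\ell}$, at each time one has $\mathbb{E}\big(\Pi_{C_\ell}(\tilde X_{C_\ell}(m),X_{-\ell}(m))\big)=\sum_a\mathbb{P}(\tilde X_{C_\ell}(m)=a)\,\psi_m(a)$ with $\psi_m(a)=\mathbb{E}\big(\Pi_{C_\ell}(a,X_{-\ell}(m))\big)$. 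The time-$m$ marginal law of the product chain $X_{-\ell}$ converges to $\prod_{h\neq\ell}\Lambda_{C_h}$, whence $\psi_m(a)\to\phi_\ell(a)$ uniformly in the finitely many $a$; this is precisely the marginal convergence furnished by the coupling estimate \eqref{X=Y} in the proof of Theorem \ref{dinamic2} (equivalently, convergence to the invariant law in the homogeneous case of Theorem \ref{thdin2} a.). Hence $\mathbb{E}(\tilde A_t)\le\mathbb{E}_{\Lambda_{C_1}\times\ldots\times\Lambda_{C_r}}(\Pi_{C_\ell})+\frac{1}{t}\sum_{m=0}^{t-1}\max_a|\psi_m(a)-\phi_\ell(a)|$, and the Cesàro average of the error tends to $0$. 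Letting $t\to\infty$ gives $\liminf_{t}\mathbb{E}(\tilde A_t)\le\mathbb{E}_{\Lambda_{C_1}\times\ldots\times\Lambda_{C_r}}(\Pi_{C_\ell})$, which combined with Fatou proves \eqref{eqult1}.

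\emph{Main obstacle.} The delicate point is the last step: an arbitrary adapted $\tilde X_{C_\ell}$ could place its mass $\mathbb{P}(\tilde X_{C_\ell}(m)=a)$ exactly on the times at which $X_{-\ell}$ is transiently far from equilibrium, so one cannot argue path-by-path that the sub-average of $\Pi_{C_\ell}(a,X_{-\ell}(\cdot))$ over the visits to $a$ converges to $\phi_\ell(a)$. What saves the argument is the combination of (i) independence, which decouples the weights $\mathbb{P}(\tilde X_{C_\ell}(m)=a)$ from the realization of $X_{-\ell}$, and (ii) convergence of the \emph{marginal} laws of $X_{-\ell}$ (not merely of its empirical distribution), so that $\psi_m(a)-\phi_\ell(a)\to0$ irrespective of how the weights are distributed. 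Establishing (ii) for the chains at hand, by invoking the Dobrushin/coupling bounds already proved, is the substantive work.
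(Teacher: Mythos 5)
Your argument reaches the same conclusion by a genuinely different route. The paper first uses tightness to extract a subsequence $(t_n)$ along which the empirical distribution of the deviating process $\tilde X_{C_\ell}$ converges to some mixed strategy $\tilde\Lambda_{C_\ell}$, argues (via independence) that the joint empirical measure then factorizes as $\tilde\Lambda_{C_\ell}\times\prod_{k\neq\ell}\Lambda_{C_k}$ (its display \eqref{calM-3}), and applies the full mixed-equilibrium inequality \eqref{mixdefstr} to $\tilde\Lambda_{C_\ell}$. You instead specialize \eqref{mixdefstr} to Dirac masses to get $\max_a\phi_\ell(a)\le\mathbb{E}_{\Lambda_{C_1}\times\cdots\times\Lambda_{C_r}}(\Pi_{C_\ell})$, split the deviating time average accordingly, and control the remainder through Fatou plus convergence of the \emph{marginal laws} of the non-deviating chains. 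Your route is cleaner in that it needs no subsequence extraction (the paper's a.s.\ convergence along a common deterministic subsequence to a deterministic limit is itself not justified) and only the pure-deviation form of the equilibrium condition, which by linearity is equivalent to the mixed one; the price is that you must know the one-time marginals of $X_{-\ell}$ converge, not merely its empirical measure.

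That price is exactly where a genuine gap sits, and you correctly flag it as ``the substantive work'' without closing it: hypothesis \eqref{calM-1} does \emph{not} imply convergence, even in the Ces\`aro-$L^1$ sense you need, of the laws of $X_{C_h}(m)$ to $\Lambda_{C_h}$. A two-state deterministic alternating chain $a,b,a,b,\dots$ is Markov, consistent with a complete $G_h$, and has empirical measure converging to the uniform law, yet $\|\mathrm{Law}(X(m))-\mathrm{unif}\|_{TV}=1/2$ for every $m$, so your error term $\frac1t\sum_m\max_a|\psi_m(a)-\phi_\ell(a)|$ does not vanish. Worse, with matching-pennies payoffs and an independent deterministic deviator synchronized to that parity, the asserted inequality itself fails, so no proof can succeed from \eqref{calM-1} alone: some aperiodicity/mixing of the $X_{C_h}$ is indispensable. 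The paper's own proof hides the identical issue inside \eqref{calM-3}, where the joint empirical measure of two independent processes is claimed to converge to the product of the separate limits --- false for the same periodic example. Your proof is therefore complete, modulo the deferred step, precisely for the chains the theorem is actually applied to (those of Theorem \ref{dinamic3}, where \eqref{X=Y}--\eqref{Dobr2} give total-variation convergence of the marginals off a set of times of zero density), and your closing paragraph is the honest statement of what must be added to make either argument rigorous.
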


\begin{proof} 
By formula \eqref{ergo1} we deduce the first two equalities in \eqref{eqult1}, so we 
have to prove only the inequality in \eqref{ergo1}. 

 Let us take  $\ell = 1, \ldots, r$. 
Consider a stochastic process $\tilde{X}_{C_{\ell}}$ 
consistent with $G_\ell$ and independent from  the  Markov chains $ X_{C_1},\ldots,X_{C_{\ell-1}}, X_{C_{\ell+1}}, \ldots,X_{C_{r}} $. 
 From the finiteness of $\calS$ one has the tightness of the distributions on $\calS$. Therefore,  there exists a sequence $(t_n:n \in \N)$ and a distribution $\tilde{\Lambda}_{C_\ell}$ on $\calS_{C_\ell}$ such that
\begin{equation}\label{calM-2}
    \lim_{n \to \infty}\frac{1}{t_n}\sum_{m=0}^{t_n-1}
    \mathbf{1}_{\{\tilde{X}_{C_\ell}(m)=s_{C_\ell}\}}=\tilde{\Lambda}_{C_\ell}(s_{C_\ell}), \qquad a.s.,
\end{equation}
for each $s_{C_\ell} \in \calS_{C_\ell}$.

The independence assumption of $\tilde{X}_{C_\ell}$ from $X_{C_1},\ldots,X_{C_{\ell-1}}, X_{C_{\ell+1}}, \ldots,X_{C_{r}}$ and \eqref{calM-2} give 
$$
 \mathbb{E}  ( \lim_{n \to \infty}\frac{1}{t_n}\sum_{m=0}^{t_n-1}
    \mathbf{1}_{\{(X_{C_1}(m),\ldots,X_{C_{\ell-1}}(m),\tilde{X}_{C_\ell}(m), X_{C_{\ell+1}}(m), \ldots,X_{C_{r}}(m)) 
=s\}}   )=
$$
\begin{equation}\label{calM-3}
=\tilde{\Lambda}_{C_\ell}(s_{C_\ell}) \left[ \prod_{k=1, \dots, r; \,k \not= \ell}{\Lambda}_{C_k}(s_{C_k}) \right], 
\end{equation}
for any  $s=(s_{C_1}, \dots, s_{C_r})   \in \calS$.
Thus,  \eqref{calM-3} leads to 
$$
 \mathbb{E}  ( \lim_{n \to \infty}\frac{1}{t_n}\sum_{m=0}^{t_n-1}\Pi_{C_\ell}(X_{C_1}(m),\ldots,X_{C_{\ell-1}}(m),\tilde{X}_{C_{\ell}}(m),X_{C_{\ell+1}}(m), \ldots,X_{C_{r}}(m))   )=
$$
\begin{equation}\label{calM-4}
=\mathbb{E}_{\Lambda_{C_1}  \times \ldots  \Lambda_{C_{\ell-1}} \times  \tilde{\Lambda}_{C_{\ell}} \times  \Lambda_{C_{\ell+1}} \times \ldots \times  \Lambda_{C_{r}}   }  (\Pi_{C_\ell})  . 
\end{equation}
By the assumption that $\Lambda_{C_1}  \times \ldots   \times  \Lambda_{C_{r}}  \in \calM_\calC $, one has that 
$$
\mathbb{E}_{\Lambda_{C_1}  \times \ldots  \Lambda_{C_{\ell-1}} \times  \tilde{\Lambda}_{C_{\ell}} \times  \Lambda_{C_{\ell+1}} \times \ldots \times  \Lambda_{C_{r}}   }  (\Pi_{C_\ell})  \leq \mathbb{E}_{\Lambda_{C_1}  \times \ldots   \times  \Lambda_{C_{r}}   }  (\Pi_{C_\ell}) .
$$
The thesis comes from
$$
    \liminf_{t \to \infty}\frac{1}{t}\sum_{m=0}^{t-1}\Pi_{C_\ell}(X_{C_1}(m),\ldots,X_{C_{\ell-1}}(m),\tilde{X}_{C_{\ell}}(m),X_{C_{\ell+1}}(m), \ldots,X_{C_{r}}(m)) \leq 
$$
$$
\leq
 \lim_{n \to \infty}\frac{1}{t_n}\sum_{m=0}^{t_n-1}\Pi_{C_\ell}(X_{C_1}(m),\ldots,X_{C_{\ell-1}}(m),\tilde{X}_{C_{\ell}}(m),X_{C_{\ell+1}}(m), \ldots,X_{C_{r}}(m)).
$$
\end{proof}

The previous theorem  says that the 
considered Markov chains form a $\calC$-equilibrium 
for the repeated  $G$-game when $T=\infty$, initial 
strategies of types $(P_0)$ or $(R_0)$ and minimal information (see Definition \ref{def:CGR}). 
From Theorem \ref{dinamic3} we know that such Markov chains exist and we have constructed them. We point out that Equation \eqref{calM-1} represents a global condition on the empirical distribution related to all the coalitions. 
Differently, Theorem \ref{dinamic3} contains a local condition which actually implies \eqref{calM-1}. In fact,
in Theorem \ref{dinamic3} we have constructed $r$ independent Markov chains such that if
$$
 \lim_{t \to \infty}\frac{1}{t}\sum_{m=0}^{t-1}
    \mathbf{1}_{\{X_{C_\ell}(m)=s_{C_\ell}\}}=
    \Lambda_{C_\ell}(s_{C_\ell}),  \qquad a.s.,
$$ 
for each $\ell=1, \ldots, r $, then one obtains also  \eqref{calM-1}. This is relevant in game theory at least in the case of minimal information. 
In fact, the independence assumption of the considered Markov chains is always valid  under the condition of minimal information. In fact, in such cases, any coalition does not know the strategies selected at the previous steps by the other coalitions playing the game.

By Theorems \ref{dinamic3} and Theorem \ref{thm:calM}, we obtain the following.

\begin{corollary}
\label{coro1}
Consider a connected graph $G=(\calS,E) $ and a repeated $G$-game with coalition structure $\calC=\{C_1, \dots, C_r\}$. Assume $T=\infty$, initial strategies of types $(P_0)$ or $(R_0)$ and information of type $(I_m)$. Assume also that $\calG=(G_1, \ldots, G_r)$ is
the $\calC$-decomposition of $G$.

For any $\Lambda_{C_1} \times \ldots \times \Lambda_{C_r} \in \calM_\calC$, there exist $r$ independent Markov chains $X_{C_1}=(X_{C_1}(t):t \in \N), \dots,
X_{C_r}=(X_{C_r}(t):t \in \N)$ with state space $\calS_{C_1}, \dots,\calS_{C_r}$ and consistent with $G_1, \ldots, G_r$, respectively, such that $X=(X_{C_1}, \dots, X_{C_r})$ is a $\calC$-equilibrium 
for the repeated $G$-game. 
\end{corollary}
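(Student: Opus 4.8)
The plan is to obtain the corollary by assembling the two preceding theorems, with no new computation required. First I would fix an arbitrary mixed $\calC$-equilibrium $\Lambda_{C_1}\times\ldots\times\Lambda_{C_r}\in\calM_\calC$ and form the product distribution $\mu=\prod_{h=1}^r\Lambda_{C_h}$ on $\calS=\prod_{h=1}^r\calS_{C_h}$. Since $G$ is connected and $\calC$-decomposable with $\calC$-decomposition $\calG=(G_1,\ldots,G_r)$, all hypotheses of Theorem \ref{dinamic3} are satisfied with the choice $\mu_{C_h}=\Lambda_{C_h}$.

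Next I would invoke Theorem \ref{dinamic3} to produce the $r$ independent Markov chains $X_{C_1},\ldots,X_{C_r}$, where $X_{C_h}$ has state space $\calS_{C_h}$ and transition matrices built as in \eqref{Penne2}, hence is consistent with $G_h$. By construction each marginal satisfies the local convergence $\lim_{t\to\infty}\frac{1}{t}\sum_{m=0}^{t-1}\mathbf{1}_{\{X_{C_h}(m)=s_{C_h}\}}=\Lambda_{C_h}(s_{C_h})$ almost surely, and Theorem \ref{dinamic3} upgrades this to the joint statement \eqref{XX11}, which is exactly the global condition \eqref{calM-1} demanded by Theorem \ref{thm:calM}.

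Then I would apply Theorem \ref{thm:calM} to these chains. That theorem yields, almost surely and for each $\ell$, the chain of relations \eqref{eqult1}, whose concluding inequality bounds the realized long-run average payoff of coalition $C_\ell$ from below by the payoff of any unilateral deviation $\tilde X_{C_\ell}$ that is consistent with $G_\ell$ and independent of the remaining chains. This is precisely the defining inequality \eqref{eqGgame} of Definition \ref{def:CGR}, so $X=(X_{C_1},\ldots,X_{C_r})$ is a $\calC$-equilibrium for the repeated $G$-game.

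The only points requiring care --- where I expect the obstacle to lie, though it is conceptual rather than technical --- are to check that the objects produced by Theorem \ref{dinamic3} meet the exact requirements of Definition \ref{def:CGR}: the chains must be adapted, independent stochastic processes consistent with the factor graphs $G_h$, and the independence is legitimate precisely because the information is of type $(I_m)$, so no coalition conditions on the others' past choices. The initial-strategy convention, whether $(P_0)$ or $(R_0)$, is immaterial here, since for the $T=\infty$ Ces\`aro limit a single initial datum does not affect the empirical distribution; this is why both regimes can be admitted at once.
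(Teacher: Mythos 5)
Your proposal is correct and follows exactly the paper's route: the paper derives Corollary \ref{coro1} precisely by combining Theorem \ref{dinamic3} (to construct the $r$ independent consistent Markov chains whose joint empirical distribution converges to $\prod_h \Lambda_{C_h}$) with Theorem \ref{thm:calM} (to conclude the defining inequality of Definition \ref{def:CGR}). Your additional remarks on adaptedness, the role of minimal information, and the irrelevance of the initial-strategy convention are consistent with the surrounding discussion in the paper.
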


Corollary \ref{coro1} is a version of the Folk Theorem 
in our framework, which guarantees the existence 
of $\calC$-equilibria for the repeated game. It is also 
important that the $\calC$-equilibria analysed are 
Markovian that means that the single coalition has to memorize only the current time and the 
strategy played in the previous stage of the game. 

As a by-product, Corollary \ref{coro1} can be also related to the memory $K$ of the single coalitions -- i.e., the knowledge by a coalition $C\in \calC$ of the strategies played at the previous $K$ stages of the repeated $G$-games by $C$. In particular, it states  that the $\calC$-equilibria are equilibria for any repeated $G$-games where any coalition has at least memory $K=1$.

\section{Conclusions}
This paper has introduced and analyzed the $G$-games, i.e. games
whose strategies are nodes of a graph $G$. This class of games
presents interesting features either under a theoretical as well as
under a practical point of view.

Indeed, several real-world situations can be modeled through game
models where the players are physically constrained to move
sequentially from a strategy to an adjacent one. The introduction of
a graph of the strategies serves to capture such constraints.
We specifically deal with $G$-games with a coalition structure. In
so doing, we admit the presence of interactions among the players.
Notice that the presented framework does not
exclude the case of absence of constraints -- one can take $G$
complete -- and the possibility of noncooperative games -- by taking
coalitions formed by single players.

A detailed exploration of several aspects is carried out. In
particular, we define the equilibria of the coalitions of pure and
mixed type and present a version of the Folk Theorem for the
class of infinitely played $G$-repeated games with connected $G$.
The definition of $\calC$-equilibria represents an extension of the
(pure) Berge and Nash equilibria. However, we do not enter here the
challenging problem of equilibria selection (see the seminal work \cite{harselten} and e.g. \cite{binmore, car93, duffy, harsanyi, KMR93, sch80}), leaving this topic for future research.

It is important to note that Theorem \ref{dinamic3} can be seen as
an \emph{universality result}. In fact, assume that multiple
equilibria are attained and a selection criterion identifies one of
them as the valid one for all the coalitions. Then, such a valid
equilibrium can be achieved under the requirement that $G$ is connected.

We also point out that coalitions are fixed in the developed game
model. The theme of coalition structure generation -- i.e., the
problem of partitioning the players, according to a specific
criterion (see e.g. \cite{sandhlom1999, sandhlom}) -- is beyond the
material presented in this paper. Also such a challenging research
theme is left for future development of the study of the $G$-games.

\bibliographystyle{abbrv}


\end{document}